\numberwithin{equation}{section}
\theoremstyle{definition}
\newtheorem{definition}{Definition}[section]
\theoremstyle{remark}
\newtheorem{remark}[definition]{Remark}
 \theoremstyle{plain}
\newtheorem{theorem}[definition]{Theorem}
\newtheorem{result}[definition]{Result}
\newtheorem{lemma}[definition]{Lemma}
\newtheorem{proposition}[definition]{Proposition}
\newcommand{\eps}{\varepsilon}
\newcommand{\zt}{\zeta}
\newcommand{\zbar}{\overline{z}}
\newcommand{\wbar}{\overline{w}}
\newcommand{\poi}{\boldsymbol{{\sf P}}}
\newcommand{\Fej}{\boldsymbol{{\sf F}}}
\newcommand{\fL}{\mathscr{L}}
\newcommand{\tht}{\theta}
\newcommand{\bdy}{\partial}
\newcommand{\D}{\mathbb{D}}
\newcommand{\smoo}{\mathcal{C}}
\newcommand{\hol}{\mathcal{O}}
\newcommand{\I}{\boldsymbol{{\sf I}}}
\newcommand{\Ide}{\mathscr{I}}
\newcommand{\idl}{\mathscr{I}_{\!{\raisebox{-2pt}{$\scriptstyle \boldsymbol{{\sf X}}$}}}}
\newcommand\IX[1]{I_{{\raisebox{-1pt}{$\scriptstyle {{#1}}, \boldsymbol{{\sf X}}$}}}}
\newcommand\ide[1]{\mathscr{I}_{\!{\raisebox{-1pt}{$\scriptstyle {{#1}}, \boldsymbol{{\sf X}}$}}}}
\newcommand\bide[1]{\mathscr{I}^\bullet_{\!{\raisebox{-1pt}{$\scriptstyle {{#1}}, \boldsymbol{{\sf X}}$}}}}
\newcommand{\hinf}{H^\infty(\D^n)}
\newcommand{\Lb}{\mathbb{L}}
\newcommand{\al}{\mathfrak{A}}
\newcommand{\dal}{\mathscr{A}}
\newcommand{\Z}{\mathbb{Z}}
\newcommand{\nat}{\mathbb{N}}
\newcommand{\re}{{\sf Re}}
\newcommand{\bcdot}{\boldsymbol{\cdot}}
\newcommand\cis[1]{e^{i{#1}}}
\newcommand{\lrarw}{\longrightarrow}
\newcommand\newfrc[2]{{}^{\raisebox{-2pt}{$\scriptstyle {#1}$}}\!/_{\raisebox{2pt}{$\scriptstyle {#2}$}}}
\newcommand{\polcl}{\mathfrak{P}}
\newcommand\lprp[1]{{}^\perp{#1}}
\newcommand\rprp[1]{{#1}^\perp}
\newcommand\qnrm[1]{\boldsymbol{|\negmedspace|}{#1}\boldsymbol{|\negmedspace|}}
\newcommand\bv[1]{{#1}^\bullet}
\newcommand{\inttor}{\smallint\nolimits_{\raisebox{-2pt}{$\scriptstyle {\Tn}$}}}
\newcommand{\wes}{\text{weak}^{\raisebox{-1pt}{$\scriptstyle {\boldsymbol{*}}$}}}
\newcommand{\pj}{x^{(j)}}
\newcommand{\Kdel}{\boldsymbol{\delta}}
\newcommand{\weak}{{\rm wk}^{\raisebox{-1pt}{$\scriptstyle {\boldsymbol{*}}$}}(\boldsymbol{{\sf I}}, \infty)}
\newcommand\spProj[1]{\Pi_{{#1}, \boldsymbol{{\sf X}}}}
\newcommand{\pee}{p^{\raisebox{-1pt}{$\scriptstyle (\varepsilon)$}}}
\newcommand{\spee}{p^{(\varepsilon)}}
\newcommand{\Cn}{\mathbb{C}^n}
\newcommand{\C}{\mathbb{C}} 
\newcommand{\R}{\mathbb{R}}
\newcommand{\Tn}{\mathbb{T}^n}
\begin{document}

\title[Pick interpolation on the polydisc]{Pick interpolation on the polydisc: small \\ families of sufficient kernels}

\author{Gautam Bharali}
\address{Department of Mathematics, Indian Institute of Science, Bangalore 560012, India}
\email{bharali@math.iisc.ernet.in}

\author{Vikramjeet Singh Chandel}
\address{Department of Mathematics, Indian Institute of Science, Bangalore 560012, India}
\email{abelvikram@math.iisc.ernet.in}

\thanks{V.\,S.\;Chandel is supported by a scholarship from the Indian Institute of Science}

\keywords{Dual algebra, kernels, Pick--Nevanlinna interpolation, polydisc, weak-star topology}
\subjclass[2010]{Primary: 32A25, 46E20; Secondary: 32A38, 46J15}

\begin{abstract}
{We give a solution to Pick's interpolation problem on the unit polydisc in $\Cn$, $n\geq 2$, by characterizing
all interpolation data that admit a $\D$-valued interpolant, in terms of a family of positive-definite
kernels parametrized by a class of polynomials. This uses a duality approach that has been associated
with Pick interpolation, together with some approximation theory. Furthermore, we use
duality methods to understand the set of points on the $n$-torus at which the boundary values of a given solution
to an {\em extremal} interpolation problem are not unimodular.}
\end{abstract}
\maketitle

\section{Introduction, some preliminaries, and a statement of results}\label{S:intro}

The interpolation problem referred to in the title is as follows: 
\begin{itemize}
\item[$(*)$] Let $X_1,\dots,X_{N}$ be distinct points in the polydisc $\D^n$ and let
$w_1,\dots,w_{N}\in\D$. Find a necessary and sufficient condition on the data $\{(X_j, w_j) : 1\leq j\leq N\}$
such that there exists a holomorphic function $F : \D^n\lrarw \D$ satisfying $F(X_j)=w_j, \ j=1,\dots,N$.
\end{itemize}
Here, and elsewhere in this paper, $\D$ denotes the open unit disc with centre $0\in \C$. 
We begin by discussing some of the ideas and
results that have influenced our theorems below (although our overview of those ideas will
be slightly ahistorical). We must begin by stating that the ideas alluded to have a close
connection to the work of Cole, Lewis and Wermer \cite{coleLewisWermer:PcuavNi92}
(also see \cite{coleWermer:PivNihc94} by Cole and Wermer) on the existence of interpolants in a given
uniform algebra for an interpolation problem between its maximal ideal space and $\D$.
\smallskip

At the heart of the works
\cite{coleLewisWermer:PcuavNi92} and \cite{coleWermer:PivNihc94} is a method, which goes back to
Sarason \cite{sarason:giH67}, of representing
the quotient of a uniform algebra by a closed ideal as an algebra of operators on some Hilbert space.
It turns out that a formula for the quotient norm in such a setting\,---\,which derives from the representation
alluded to\,---\,can be transported to the setting of {\em dual algebras} and their quotients by
{\em weak${}^{{\boldsymbol{*}}}$ closed
ideals}. In \cite{mccullough:NPtida96}, McCullough provides such a formula. He further uses the insights gained in
proving this formula in such a way as to {\em also} address the existence of interpolants in $\hinf$ for the
problem $(*)$.
\smallskip

Let us elaborate upon the phrase ``dual algebra''.
Given a complex, separable Hilbert space $H$, let $\mathcal{B}(H)$ be the space of bounded
operators on $H$. It is known that the dual of the space of trace class operators of $H$ is
isometrically isomorphic to $\mathcal{B}(H)$ (endowed with the operator-norm topology). Via this
isomorphism, one can make sense of the $\wes$ topology on $\mathcal{B}(H)$. A unital subalgebra
$\dal$ of $\mathcal{B}(H)$ is called a {\em dual algebra} if it is $\wes$ closed. Our interest in dual algebras
stems from the fact that $\hinf$\,---\,the class of all bounded holomorphic functions on $\D^n$\,---\,is a
dual algebra. Hence, let us specialize to $\D^n$. Write:
\begin{align}
 A(\D^n)\,&:=\,\smoo(\overline{\D^n}; \C)\cap\hol(\D^n), \notag \\
 \Tn\,&:=\,(\bdy\D)^n \varsubsetneq \bdy(\D^n) \quad \text{and} 
 \quad m\,=\,\text{the normalized Lebesgue measure on $\Tn$}. \label{E:Leb-meas}
\end{align}
Recall that the classical Hardy space $H^2(\Tn)$ is the closure in $\Lb^2(\Tn, dm)$ of
$\left.A(\D^n)\right|_{\Tn} := \{\left.f\right|_{\Tn} : f\in A(\D^n)\}$. The space of all multipliers
preserving $H^2(\Tn)$ is $\hinf$. (The functions in $H^2(\Tn)$ and $\hinf$
have different domains of definition, but we assume
that readers know how this apparent problem is dealt with\,---\,and refer them to Section~\ref{SS:charac_weak}
if they don't.) Viewed as a subalgebra of $\mathcal{B}(H^2(\Tn))$, it is known that $\hinf$ is a dual algebra.
In view of the discussion above, 
with $H = H^2(\Tn)$,
it is meaningful to talk about the $\wes$ closure of a subalgebra of $\hinf$.
\smallskip

We now have almost all the background needed to present our first theorem, and to introduce a
result that has strongly influenced this theorem. We first fix some notation. We will
always use $\al$ to denote a uniform subalgebra of $A(\D^n)$. Given $g\in \Lb^2(\Tn, dm)$, we shall
set
\[
 \al^2(g)\,:=\,\text{the closure of $\left.\al\right|_{\Tn}$ in $\Lb^2(\Tn, |g|^2dm)$}.
\]
The following spaces associated to $\al$ are very useful in the discussion of Pick interpolation in higher dimensions:
\begin{align}
 \lprp{\al}\,&:=\,\Big\{f\in \Lb^1(\Tn) : \inttor\!\psi f dm = 0 \ \text{for each} \ \psi\in \al\Big\},
 \label{E:lprp_A} \\
 \dal(\al)\,&:=\,(\text{the closure of $\left.\al\right|_{\D^n}$ in the topology of local unif.\;convergence})\cap\hinf.
 \label{E:luClo_A}
\end{align}
Furthermore, we need a definition. (We shall abbreviate $\Lb^p(\Tn, dm)$ to $\Lb^p(\Tn)$, $p = 1, 2, \infty$.)

\begin{definition}
Let $\al$ be a uniform subalgebra of $A(\D^n)$. We say that $\al$ {\em has a tame pre-annihilator} if
$(\smoo(\Tn; \C)\cap\lprp{\al})$ is dense in $\lprp{\al}$ in the $\Lb^1(\Tn)$-norm.
\end{definition}

Theorem~\ref{T:interp-Char} below is strongly motivated by the following result of McCullough. We shall
paraphrase it for the case of the polydisc $\D^n$, since this is the representative case, and the argument
for the set-up in \cite[Theorem~5.12]{mccullough:NPtida96} follows, after a few adjustments, nearly verbatim the
argument in the case of $\D^n$.

\begin{result}[paraphrasing {\cite[Theorem~5.12]{mccullough:NPtida96}} for the case of $\D^n$, and
$m$ as in {\eqref{E:Leb-meas}}]\label{R:McC}
Let $X_1,\dots,X_{N}$ be distinct points in $\D^n$, $n\geq 2$, and let $w_1,\dots,w_{N}\in\D$. Fix a uniform algebra
$\al\subseteq A(\D^n)$ having a tame pre-annihilator. Furthermore assume that
\begin{itemize}
 \item[$(a)$] $\al$ is approximating in modulus, and
 \item[$(b)$] $\overline{K_{\al}}(X_j, \bcdot) \in \left.\al\right|_{\Tn}$ for each $j = 1, 2,\dots, N$,
\end{itemize}
where $K_{\al}(x, \bcdot)$, $x\in \D^n$, is the Szeg{\H{o}} kernel associated with the Hilbert space $\al^2(1)$.
Then, there exists a function $F\in (\Lb^\infty(\Tn)\cap \al^2(1))$ with $\sup_{\D^n}|F|\leq 1$ and such that
the Poisson integral $\poi[F]$ satisfies
$\poi[F](X_j) = w_j$, for each $j = 1,\dots, N$, if and only if the matrices
\begin{equation}\label{E:Pick-sys}
 \left[(1 - w_j\wbar_k)\big\langle K_{\al, \psi}(X_j, \bcdot), K_{\al, \psi}(X_k, \bcdot)\big\rangle_{\al^2(\psi)}
 \right]_{j,\,k = 1}^N\,\geq\,0,
\end{equation}
for each $\psi\in \al$ such that $|\psi| > 0$ on $\Tn$, where $K_{\al,\psi}(x, \bcdot)$, $x\in \D^n$,
is the Szeg{\H{o}} kernel associated with the Hilbert space $\al^2(\psi)$.
\end{result}

\noindent{We refer the reader to the beginning of Section~\ref{S:funcAnal} for a discussion of the term
``Szeg{\H{o}} kernel associated to a Hilbert space'', and of the notation we follow. A uniform
subalgebra $\al\subseteq A(\D^n) $ is said to be {\em approximating in modulus} if for each non-negative
function $g\in \smoo(\Tn; \C)$ and each $\eps > 0$, there exists a $\psi\in \al$ such that
$\sup_{\,\Tn}|\,|\psi| - g| < \eps$.}
\smallskip

In its full generality, \cite[Theorem~5.12]{mccullough:NPtida96} is an interpolation theorem of the
Cole--Lewis--Wermer type. In its paraphrasing as Result~\ref{R:McC}, it is {\em very}
interesting because it solves the problem $(*)$, with interpolants belonging to the Schur class. Moreover, it does
so by providing us with an easier to understand and smaller family of kernels\,---\,i.e., those that feature
in \eqref{E:Pick-sys}\,---\,necessary and sufficient for the existence of an interpolant than those appearing in 
\cite{coleLewisWermer:PcuavNi92, coleWermer:PivNihc94}. (We shall not elaborate any further: interested
readers are referred to \cite[Proposition~5.9]{mccullough:NPtida96}.)
 It is not possible, when
$n\geq 2$, to replace the family of Pick matrices in \eqref{E:Pick-sys} with a single matricial condition as in Pick's
well-known solution to $(*)$ for $n = 1$. Yet, the contrast between
Pick's result and the situation when $n\geq 2$ is a constant
stimulus to finding a smaller and/or more explicitly defined family of kernels that are necessary and sufficient for the
existence of an interpolant. Indeed, this was our primary motivation for re-examining the proof of Result~\ref{R:McC}
and for the following (in this paper, $D(a; r)$ will denote the open disc of radius $r > 0$ with centre $a\in \C$):

\begin{theorem}\label{T:interp-Char}
Let $X_1,\dots,X_{N}$ be distinct points in $\D^n$, $n\geq 2$, and let $w_1,\dots,w_{N}\in\D$. Let $\dal$ be a
weak${}^{{\boldsymbol{*}}}$ closed subalgebra of $\hinf$ such that $\dal = \dal(\al)$ for some uniform subalgebra
$\al\subseteq A(\D^n)$ having a tame pre-annihilator. Fix an integer $R\geq 1$, and define
\[
 \polcl(R)\,:=\,\left\{p\in \C[z_1,\dots, z_n] : p^{-1}\{0\}\cap\overline{D(0; R)}^n = \varnothing\right\}.
\] 
There exists a function $F\in \dal(\al)$ such that $F: \D^n\lrarw \D$ and $F(X_j) = w_j$, for each $j = 1,\dots, N$,
if and only if the matrices
\begin{equation}\label{E:Pick-sys2}
 \left[(1 - w_j\wbar_k)\big\langle K_{\al,\,p}(X_j, \bcdot), K_{\al,\,p}(X_k, \bcdot)\big\rangle_{\al^2(p)}
 \right]_{j,\,k = 1}^N\,\geq\,0 \; \; \; \text{for each $p\in \polcl(R)$},
\end{equation}
where $K_{\al,\,p}(x, \bcdot)$, $x\in \D^n$,
is the Szeg{\H{o}} kernel associated with the Hilbert space $\al^2(p)$.
\end{theorem}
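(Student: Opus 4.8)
The plan is to prove necessity and sufficiency separately; necessity is a routine reproducing-kernel computation, while sufficiency rests on the duality argument behind Result~\ref{R:McC} together with an approximation step, the latter being where passage to the polynomial family $\polcl(R)$ is justified. \emph{For necessity}, fix $p\in\polcl(R)$. Since $R\geq1$, $p$ is zero-free on $\overline{\D^n}$, so $|p|^2$ is bounded above and away from $0$ on $\Tn$; hence $\al^2(p)$ embeds continuously in $\al^2(1)$ and, after passing to holomorphic extensions to $\D^n$, is a reproducing-kernel Hilbert space of holomorphic functions on $\D^n$ with kernel $K_{\al,\,p}$, on which each $\psi\in\al$ is a multiplier of norm $\sup_{\Tn}|\psi|$. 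By the description of $\dal(\al)$ recalled in Section~\ref{SS:charac_weak} (a bounded, locally uniformly convergent net from $\al$ converges $\wes$ in $\hinf$), any $F\in\dal(\al)$ with $\sup_{\D^n}|F|\leq1$ is a contractive multiplier of $\al^2(p)$ and $M_F^{*}K_{\al,\,p}(X_j,\bcdot)=\overline{F(X_j)}\,K_{\al,\,p}(X_j,\bcdot)=\overline{w_j}\,K_{\al,\,p}(X_j,\bcdot)$. Compressing $M_F$ to $V:=\operatorname{span}\{K_{\al,\,p}(X_j,\bcdot):1\leq j\leq N\}$ gives a contraction whose adjoint has the $K_{\al,\,p}(X_j,\bcdot)$ as eigenvectors with eigenvalues $\overline{w_j}$; spelling out $\|M_F^{*}v\|^2\leq\|v\|^2$ for $v=\sum_j c_j K_{\al,\,p}(X_j,\bcdot)$ is exactly the positivity asserted in \eqref{E:Pick-sys2}.

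\emph{For sufficiency}, assume \eqref{E:Pick-sys2} for every $p\in\polcl(R)$. A preliminary reduction: if $\al$ fails to separate two of the $X_j$ then, for every $p$, the two corresponding kernel vectors $K_{\al,\,p}(X_i,\bcdot)$ and $K_{\al,\,p}(X_j,\bcdot)$ coincide (elements of $\al^2(p)$ take equal values at those points), and the relevant $2\times2$ minor of \eqref{E:Pick-sys2} forces the two $w$-values to agree; deleting repetitions we may assume that $\psi\longmapsto(\psi(X_1),\dots,\psi(X_N))$ maps $\al$ onto $\C^N$ (a unital, point-separating subalgebra of $\C^N$ equals $\C^N$). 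Choose $G_0\in\al\subseteq\dal(\al)$ with $G_0(X_j)=w_j$ and set $\Ide:=\{G\in\dal(\al):G(X_j)=0,\ 1\leq j\leq N\}$, a $\wes$-closed ideal of codimension $N$. By the maximum principle together with $\wes$-compactness of the closed unit ball of $\dal(\al)$, the existence of $F\in\dal(\al)$ with $F:\D^n\lrarw\D$ and $F(X_j)=w_j$ is equivalent to $\operatorname{dist}_{\Lb^\infty(\Tn)}(G_0,\Ide)\leq1$. For $p\in\polcl(R)$ write $\mathcal K_p:=\operatorname{span}\{K_{\al,\,p}(X_j,\bcdot)\}$; then $M_{G_0}^{*}$ leaves $\mathcal K_p$ invariant, the compression $T_p:=P_{\mathcal K_p}M_{G_0}|_{\mathcal K_p}$ depends only on $(w_j)_j$, and \eqref{E:Pick-sys2} for $p$ says exactly that $\|T_p\|\leq1$. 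Since $\|P_{\mathcal K_p}M_{G_0-g}|_{\mathcal K_p}\|=\|T_p\|$ for every $g\in\Ide$ and is $\leq\|G_0-g\|_{\Lb^\infty}$, we get $\sup_{p\in\polcl(R)}\|T_p\|\leq\operatorname{dist}_{\Lb^\infty}(G_0,\Ide)$; everything thus reduces to the reverse inequality $\operatorname{dist}_{\Lb^\infty(\Tn)}(G_0,\Ide)\leq\sup_{p\in\polcl(R)}\|T_p\|\;(\leq1)$.

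To prove this reverse inequality I would re-examine the Cole--Lewis--Wermer/McCullough duality argument underlying Result~\ref{R:McC}. Since $\Ide$ is $\wes$-closed, Hahn--Banach in the predual gives $\operatorname{dist}_{\Lb^\infty}(G_0,\Ide)=\sup\{|\varphi(G_0)|:\varphi\in\Lb^1(\Tn)/\lprp{\al},\ \|\varphi\|\leq1,\ \varphi|_\Ide=0\}$, and each such $\varphi$ restricts on $\al$ to a combination $\sum_j c_j\,(\psi\mapsto\psi(X_j))$, so $\varphi(G_0)=\sum_j c_j w_j$. As in Result~\ref{R:McC}, the tame pre-annihilator hypothesis is what allows a near-extremal such $\varphi$ to be carried by a sufficiently regular density on $\Tn$ and $\|\varphi\|$ to be recast, via a weighted-$\Lb^2(\Tn)$ reproducing-kernel representation, as an infimum over a family of weights; the new point is that this family may be taken to be $\{|p|^2:p\in\polcl(R)\}$. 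The mechanism is a factorization-plus-approximation: the weights relevant here are moduli of functions in $A(\D^n)$ that are zero-free on $\Tn$, so (inner--outer structure on the polydisc) the logarithm of such a weight is a pluriharmonic boundary value, given by the real part of a zero-free holomorphic function; approximating that function on a neighbourhood of $\overline{\D^n}$ by a polynomial $q$ and truncating the Taylor series of $e^{q}$ yields $p\in\polcl(R)$ with $|p|^2$ uniformly close to the given weight on $\Tn$. Because $R\geq1$ makes $1/p$ holomorphic on a neighbourhood of $\overline{\D^n}$ and $\al^2(p)$ a genuine reproducing-kernel space on $\D^n$, and because the kernels $K_{\al,\,p}(X_i,X_j)$ — equivalently the matrices in \eqref{E:Pick-sys2} — vary continuously under uniform perturbation of a weight that stays bounded and bounded below, one can push the resulting errors through and conclude $|\varphi(G_0)|\leq\sup_{p\in\polcl(R)}\|T_p\|$, which finishes the proof.

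The main obstacle is precisely this last point: showing that the small family $\polcl(R)$ is already \emph{norming} for the distance in question, i.e. that McCullough's kernel family $\{\al^2(\psi):\psi\in\al,\ |\psi|>0\text{ on }\Tn\}$ can be shrunk to $\{\al^2(p):p\in\polcl(R)\}$ without weakening the criterion, and doing so \emph{without} invoking the auxiliary hypotheses $(a)$, $(b)$ of Result~\ref{R:McC} (which are not assumed here). Three ingredients make this delicate. First, the tame pre-annihilator hypothesis is needed even to begin, since it is what lets the extremal predual functionals be represented by densities regular enough to be factored and approximated; without it one is stuck with arbitrary $\Lb^1$ representatives and no control at the zeros of the weight. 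Second, the passage from an admissible weight to $|p|^2$ is a genuinely several-variable matter: unlike in one variable, a general positive function on $\Tn$ is \emph{not} the modulus-squared of a polynomial (its logarithm need not be a pluriharmonic boundary value), so one must exploit the specific form of the weights arising and their inner--outer factorization, which is itself subtle on the polydisc. Third, one needs uniform control of the kernels $K_{\al,\,p}$ — and hence of $\|T_p\|$ — under the approximation; verifying these continuity estimates, and that the accumulated errors are uniformly absorbed, is where the real work lies.
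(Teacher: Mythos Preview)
Your overall architecture---necessity via the standard eigenvector computation for $M_F^*$ on kernel vectors, sufficiency via the duality $\|[\phi]\|=\|L_{[\phi]}\|_{\rm op}$ between $\dal(\al)/\ide{\al}$ and $\big(\lprp{\!\ide{\al}}/\lprp{\dal(\al)}\big)^*$, with the tame pre-annihilator hypothesis used to replace a near-extremal $\Lb^1$-density by a continuous one---matches the paper's proof closely (this is the content of Lemma~\ref{L:isom2} and Proposition~\ref{P:norm_formula}). The divergence is entirely in how you propose to reach the family $\{|p|^2:p\in\polcl(R)\}$.

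The paper does \emph{not} pass through the weights of Result~\ref{R:McC}, does not use inner--outer factorization on the polydisc, and does not invoke any continuity of $p\mapsto K_{\al,\,p}$. Instead it proves a free-standing approximation lemma (Proposition~\ref{P:posFn_poly}): for \emph{any} strictly positive $f\in\smoo(\Tn;\R)$ and any $\eps>0$ there exists $p\in\polcl(R)$ with $\sup_{\Tn}\big|\,|p|^2-f\,\big|<\eps$. The stated mechanism is: apply Fej\'er's theorem on $\Tn$ to $\log f$; write the resulting real trigonometric polynomial as $\re(P_k)$ for a holomorphic polynomial $P_k\in\C[z_1,\dots,z_n]$; set $g_k:=e^{P_k/2}$, which is entire and nowhere zero; and truncate its Taylor series far enough that the truncation $p$ stays zero-free on $R\!\cdot\!\overline{\D^n}$ and $|p|$ is uniformly close to $|g_k|$ there. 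Your worry that ``the logarithm need not be a pluriharmonic boundary value'' lands exactly on the assertion $\log(f)\ast\Fej_k=\re(P_k)$ for a \emph{holomorphic} polynomial $P_k$; since for $n\geq2$ a general real trigonometric polynomial on $\Tn$ can have Fourier support outside $\nat^n\cup(-\nat^n)$ (e.g.\ $\cos(\theta_1-\theta_2)$), you are right to scrutinize that step, but that is where the paper places the entire burden---not in inner--outer theory.

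Your third obstacle (uniform control of kernels under perturbation of the weight) is not present in the paper's argument and can be dropped from your plan. In Proposition~\ref{P:norm_formula}, once a continuous representative $F_\eps$ of a near-extremal coset is in hand, Proposition~\ref{P:posFn_poly} is applied to the single function $|F_\eps|+3\eps/4$ to produce one specific $p^{(\eps)}\in\polcl(R)$ satisfying $|F_\eps|+\eps/2<|p^{(\eps)}|^2<|F_\eps|+\eps$ on $\Tn$; the entire estimate $|L_{[\phi]}([f])|\leq\|\Pi_{\al^2(p^{(\eps)})}\circ M^*_\phi\circ\spProj{p^{(\eps)}}\|_{\rm op}\,(\qnrm{[f]}+3\eps)$ is then carried out in the fixed Hilbert space $\al^2(p^{(\eps)})$. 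The supremum over $\polcl(R)$ appears only because $p^{(\eps)}$ is some element of $\polcl(R)$; no limiting process in $p$, and hence no kernel-continuity, is used.
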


\begin{remark}\label{rem:features}
The hypothesis on $\dal$ above holds true for $\dal = \hinf$. It is
well known that $\hinf = \dal(A(\D^n))$ (as per our notation in \eqref{E:luClo_A}). We refer
the reader to the end of Section~5 in \cite{mccullough:NPtida96} for a demonstration that
$A(\D^n)$ has a tame pre-annihilator. See the first paragraph of Section~\ref{S:non-mod1Set_proof}
for a gist of that discussion. In short, Theorem~\ref{T:interp-Char} provides new information
even for the basic problem $(*)$. Secondly, for both
the classical problem $(*)$ and when $\al\varsubsetneq A(\D^n)$ we give a much more explicit
family of kernels than Result~\ref{R:McC} that are sufficient for interpolation. Indeed, we
see that there are {\em progressively smaller} families of kernels that are sufficient for interpolation.
Lastly, Theorem~\ref{T:interp-Char} is a result of Cole--Lewis--Wermer type, characterizing the existence
of interpolants in a variety of unital $\wes$ closed subalgebras of $\hinf$.
\end{remark}

The last sentence of Remark~\ref{rem:features} needs some explanation. Theorem~\ref{T:interp-Char}
suggests that $\dal(\al)$, as defined in \eqref{E:luClo_A}, is $\wes$ closed. In fact, with {\em no
further conditions} on $\al \subseteq A(\D^n)$, we can calculate its $\wes$ closure as follows:

\begin{proposition}[see Proposition~\ref{P:closure_subalg} below]\label{P:closure_wes}
Let $\al$ be a uniform
subalgebra of $A(\D^n)$. Its weak${}^{{\boldsymbol{*}}}$ closure (with $\al$ viewed
as embedded in $\mathcal{B}(H^2(\Tn))$ with the weak${}^{{\boldsymbol{*}}}$ topology) is
$\dal(\al)$.
\end{proposition}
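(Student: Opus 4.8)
Set $\mathcal{D}:=\dal(\al)$, the subalgebra defined in \eqref{E:luClo_A}, and recall that $\hinf$ is $\wes$-closed in $\mathcal{B}(H^2(\Tn))$, so the $\wes$ closure of $\al$ is automatically contained in $\hinf$ and is taken for the topology $\sigma(\hinf,\hinf_*)$. The plan is to prove the two inclusions $\overline{\al}^{\,\wes}\subseteq\mathcal{D}$ and $\mathcal{D}\subseteq\overline{\al}^{\,\wes}$ separately, and everything pivots on one topological fact that I would establish first: on each closed ball $B_r:=\{F\in\hinf:\|F\|_\infty\le r\}$, the $\wes$ topology coincides with the topology of uniform convergence on compact subsets of $\D^n$. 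Indeed, for every trace-class operator $T$ on $H^2(\Tn)$ the functional $F\mapsto\operatorname{tr}(M_F T)$ is continuous on $B_r$ for the local uniform topology: expanding $T$ as an absolutely convergent series of rank-one operators reduces matters to functionals $F\mapsto\langle M_F u,v\rangle=\int_{\Tn}F^*\,u\,\overline{v}\,dm$ with $u\overline{v}\in L^1(\Tn)$, and if $F_\alpha\to F$ locally uniformly with $\sup_\alpha\|F_\alpha\|_\infty\le r$ then the boundary values $F_\alpha^*$ converge to $F^*$ in the $\wes$ topology of $L^\infty(\Tn)$ --- any $\wes$ cluster point of $(F_\alpha^*)$ is bounded and has Poisson integral equal to $F$, hence equals $F^*$. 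Consequently the identity map from $B_r$ with the local uniform topology to $B_r$ with the $\wes$ topology is continuous; the source is compact by Montel's theorem and the target is Hausdorff, so the map is a homeomorphism. In particular, a norm-bounded sequence $(\psi_j)\subseteq\hinf$ converging to $f$ locally uniformly on $\D^n$ converges to $f$ in the $\wes$ topology.

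Granting this, the inclusion $\overline{\al}^{\,\wes}\subseteq\mathcal{D}$ reduces to showing $\mathcal{D}$ is $\wes$-closed, since $\al\subseteq\mathcal{D}$ trivially (each $\psi\in\al\subseteq A(\D^n)$ is bounded and is its own local uniform limit). Now $\mathcal{D}$ is convex --- $\al$ is a linear space, so its local uniform closure among holomorphic functions on $\D^n$ is too --- and so, by the Krein--\v{S}mulian theorem, it is enough to prove that $\mathcal{D}\cap B_r$ is $\wes$-closed for every $r>0$. But $\mathcal{D}\cap B_r$ is closed in $B_r$ for the local uniform topology, because a locally uniform limit of functions that lie in the local uniform closure of $\al|_{\D^n}$ and have modulus $\le r$ is again such a function; hence, by the homeomorphism just obtained, $\mathcal{D}\cap B_r$ is $\wes$-compact, and therefore $\wes$-closed.

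The reverse inclusion $\mathcal{D}\subseteq\overline{\al}^{\,\wes}$ is the substantive part. By the last sentence of the first paragraph it suffices to show that every $f\in\mathcal{D}$ with $\|f\|_\infty\le1$ is the local uniform limit on $\D^n$ of a sequence $(\psi_j)\subseteq\al$ with $\sup_j\|\psi_j\|_\infty<\infty$; equivalently --- by the bipolar theorem together with the standard identification of $\hinf_*$, under which the pre-annihilator of $\al$ becomes $\lprp{\al}$, so that $\overline{\al}^{\,\wes}=\{F\in\hinf:\int_{\Tn}F^* g\,dm=0 \text{ for every }g\in\lprp{\al}\}$ --- it suffices to show $\int_{\Tn}f^* g\,dm=0$ for every $g\in\lprp{\al}$. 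One starts with \emph{some} sequence $(\psi_k)\subseteq\al$ with $\psi_k\to f$ locally uniformly --- available since $f\in\mathcal{D}$ and the local uniform topology on holomorphic functions is metrizable --- but $\sup_k\|\psi_k\|_\infty$ may be infinite, so the $\psi_k$ must be brought under control near the distinguished boundary $\Tn$. I would \emph{not} try the naive remedy of passing to the coordinatewise dilates $z\mapsto\psi_k(\rho z)$, $\rho\uparrow1$: a uniform subalgebra of $A(\D^n)$ need not be invariant under $z\mapsto\rho z$. Instead, starting from $\int_{\Tn}f^* g\,dm=\lim_{\rho\uparrow1}\int_{\Tn}f(\rho\zeta)\,g(\zeta)\,dm(\zeta)$ (Fatou plus dominated convergence, using $\|f\|_\infty\le1$) and the uniform approximation of $\zeta\mapsto f(\rho\zeta)$ on $\Tn$ by $\zeta\mapsto\psi_k(\rho\zeta)$, I would use the algebra structure of $\al$ together with the factorization property of $\mathcal{D}$ --- every $F\in\mathcal{D}$ is constant on the joint level sets of $\al$ in $\D^n$, so $\mathcal{D}$ is modelled on a space of bounded holomorphic functions on the image of $\al$ --- so as to replace the failed coordinatewise dilation by a dilation performed in that image, where it does not increase the supremum norm and keeps the approximants in $\al$. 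I expect this last step --- producing a uniformly bounded approximating sequence from $\al$ in spite of the unboundedness of the $\psi_k$ on $\Tn$ --- to be the main obstacle, and the place where the real content of the proposition resides.
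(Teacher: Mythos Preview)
Your argument for the inclusion $\overline{\al}^{\,\wes}\subseteq\dal(\al)$ is correct and is essentially the paper's: both use the Krein--\v{S}mulian theorem together with the characterization of $\wes$-convergent sequences in $\hinf$ (the paper's Proposition~\ref{P:Weakseq_charac}) as the uniformly bounded, pointwise-convergent ones, to conclude that $\dal(\al)$ is $\wes$-closed.

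The genuine gap is the reverse inclusion $\dal(\al)\subseteq\overline{\al}^{\,\wes}$, and you have located it precisely: from $f\in\dal(\al)$ one gets $(\psi_k)\subset\al$ with $\psi_k\to f$ locally uniformly but no control on $\sup_k\|\psi_k\|_\infty$, so neither the sequence characterization nor your double limit $\lim_{\rho\uparrow1}\lim_k\int_{\Tn}\psi_k(\rho\zeta)g(\zeta)\,dm$ can be pushed through. Your proposed remedy\,---\,a ``dilation performed in the image of $\al$''\,---\,is not an argument: no mechanism is supplied for producing a norm-bounded approximating sequence inside $\al$, and the vague appeal to ``joint level sets'' does not yield one. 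It is worth noting that the paper's own treatment of this direction (``The proof of $(1)$ is immediate from the last lemma and Proposition~\ref{P:Weakseq_charac}'') glosses over the same point: Krein--\v{S}mulian combined with Proposition~\ref{P:Weakseq_charac} only shows that a subspace is $\wes$-closed iff it is closed under \emph{bounded} pointwise-convergent sequences, which says nothing about an $f$ reached via an unbounded one. Indeed, at the generality of the paper's Proposition~\ref{P:closure_subalg} (arbitrary linear $S\subseteq A(\D^n)$) the reverse inclusion can fail: for $n=1$ and $S=\operatorname{span}_{\C}\{z+\nu z^\nu:\nu\ge 2\}$ the function $z$ lies in the locally-uniform closure of $S$ intersected with $H^\infty(\D)$ (since $z+\nu z^\nu\to z$ locally uniformly), yet $z\notin\overline{S}^{\,\wes}$, because the rank-one operator $T=1\otimes v$ with $v=2z+\log(1-z)\in H^2$ satisfies ${\sf trace}(M_{z+\nu z^\nu}T)=0$ for every $\nu\ge2$ while ${\sf trace}(M_zT)=1$. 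A correct proof of Proposition~\ref{P:closure_wes} must therefore exploit the algebra structure of $\al$ in an essential way; neither your sketch nor the paper's brief argument does so.
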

  
Before we introduce our next theorem, we ought to mention that the representation, alluded to above,
of the quotient of a uniform algebra by a closed ideal as an algebra of operators on some Hilbert space
was first proved for $A(\D)$ by Sarason in \cite{sarason:giH67}. His approach to
Pick interpolation has been very influential. That approach led to Agler's solution of $(*)$ for $n = 2$:
see \cite{agler:unpub88} (see also the articles \cite{ball:UcrkHsN-Pisv98} by Ball--Trent and 
\cite{agmac:N-Pib99} by Agler--McCarthy).
There have been a number of articles, based on largely functional-analytic ideas, in the last two decades
that have dwelt on the problem $(*)$: we refer the reader to the works listed in the
bibliography of \cite{hamilton:Pisv13}. The latter work, we must mention,
addresses\,---\,using a result of Bercovici--Westwood \cite{bercWest:ffp92}\,---\,the problem of characterizing
the existence of interpolants in an arbitrary unital $\wes$ closed subalgebra of $\hinf$.
Our proof of Theorem~\ref{T:interp-Char}
also relies, to an extent, on
some of those ideas (and is influenced by \cite{mccullough:NPtida96}). However, at one
crucial juncture\,---\,when we introduce the family $\polcl(R)$\,---\,we revisit some hands-on computations
involving the uniform algebra $A(\D^n)$. Additionally, Proposition~\ref{P:closure_wes} plays an essential
role in the proof.
\smallskip

Our next result is aimed at understanding the functions that interpolate the data
$\{(X_j, w_j) : 1\leq j\leq N\}$ for which the interpolation problem $(*)$ is extremal. We say that
the problem $(*)$\,---\,given the data $\{(X_j, w_j) : 1\leq j\leq N\}$\,---\,is {\em extremal} if it admits
an interpolant $F$ for these data with $\sup_{\D^n}|F| = 1$ but admits no interpolant of
sup-norm less than $1$.
\smallskip

The specific form of Theorem~\ref{T:non-mod1Set} below is motivated, in part, by a result of
Amar and Thomas \cite{amarThomas:fimun00} (see below), and by the fact that
the generic extremal problem for the bidisc, and with $N = 3$, has a unique solution that is a rational inner
function\,---\,see \cite[Theorem~12.13]{aglerMcCarthy:PiHfs02}. Some interesting results on the extremal
problem in higher dimensions, but still with $N = 3$, were obtained recently by Kosi{\'n}ski \cite{kos:3pNPpp15}.
Little is currently known when $N\geq 4$. It is not even known whether,
for a generic extremal problem, there exists an interpolant that (generalizing the situation in the bidisc) is an
inner function. A bounded holomorphic function $f$ on $\D^n$ is called an {\em inner function} if
the values of the radial boundary-value function $\bv{f}$, defined as
\begin{equation}\label{E:rbv}
 \bv{f}(\zt)\,:=\,\lim_{r\to 1^-}f(r\zt) \; \; \; \text{(for $m$-a.e. $\zt\in \Tn$)},
\end{equation}
are unimodular $m$-a.e. on $\Tn$. We recall here that the fact that the limit on the right-hand side of
\eqref{E:rbv} exists $m$-a.e. on $\Tn$ is the extension of a well-known theorem of Fatou to higher
dimensions (see Section~\ref{SS:charac_weak} for more details). 
\smallskip

Amar and Thomas use the phrase ``all the points of $\{X_j : 1\leq j\leq N\}$ are active constraints'' 
to refer to a generic extremal problem on $\D^n$. We shall not define this term here; the reader is referred
to \cite[Section~0]{amarThomas:fimun00} for a definition. The authors are interested in the nature of
the maximum modulus set $M(\phi)$ of an interpolant $\phi$ for a given extremal problem. To be precise:

\begin{result}[paraphrasing {\cite[Theorem~1]{amarThomas:fimun00}} for the case of the polydisc]\label{R:AT}
Let $X_1,\dots,X_{N}$, distinct points in $\D^n$, $n\geq 2$, and $w_1,\dots,w_{N}\in\D$ be data for an extremal
Pick interpolation problem on $\D^n$. Let $\phi$ be any interpolant in the Schur class. Write
\[
 M(\phi)\,:=\,\{\zt\in \Tn : \limsup\nolimits_{\D^n\ni z\to \zt}|\phi(z)| = 1\}.
\]
Let $[M(\phi)]^{\wedge}_{A(\D^n)}$ denote the $A(\D^n)$-hull of $M(\phi)$. 
If all the points of $\{X_j : 1\leq j\leq N\}$ are active constraints, then
$[M(\phi)]^{\wedge}_{A(\D^n)}\supset \{X_j : 1\leq j\leq N\}$. In general,
$[M(\phi)]^{\wedge}_{A(\D^n)}\cap\{X_j : 1\leq j\leq N\}\neq \varnothing$.
\end{result}

The result above describes, in some sense, the structure of $M(\phi)$. A natural question that arises from the
discussion prior to Result~\ref{R:AT} is how close the interpolant $\phi$ is to an inner function.
This entails studying the size of the set $\{\zt\in \Tn : |\bv{\phi}(\zt)| = 1\}$.  Result~\ref{R:AT}
does not quite provide this information and, furthermore, we have the difficulty that 
\[
 M(\phi)\,\supseteq\,\{\zt\in \Tn : |\bv{\phi}(\zt)| = 1\}.
\]
However, some of the tools used in our proof of Theorem~\ref{T:interp-Char} can be used to obtain
information on the set on the right-hand side above. To be more precise, we show that
if $\{\zt\in \Tn : |\bv{\phi}(\zt)| = 1\}$ is not of full measure, then the set
$\Tn\setminus \{\zt\in \Tn : |\bv{\phi}(\zt)| = 1\}$ is constrained in a rather specific fashion. Before
we can state this theorem, we need the following

\begin{definition}
Let $X$ be a real-analytic manifold. A set $S\subseteq X$ is called a {\em semi-analytic set} if for each
point $p\in S$, there exists an open set $U_p\ni p$ and functions $f_{jk}\in \smoo^{\omega}(U_p; \R)$,
$j = 1,\dots, \mu$, $k = 1,\dots, \nu$, such that
\[
 S\cap U_p = \bigcup\nolimits_{1\leq j\leq \mu}\bigcap\nolimits_{1\leq k\leq \nu}S_{jk},
\]
where each $S_{jk}$ is either $\{x\in U_p : f_{jk}(x) = 0\}$ or $\{x\in U_p : f_{jk}(x) > 0\}$.
\end{definition}

We are now in a position to state our next theorem.

\begin{theorem}\label{T:non-mod1Set}
Let $X_1,\dots, X_N$ be distinct points in $\D^n$, $n\geq 2$, and let $w_1,\dots, w_N\in \D$.
Assume that $(X_1,\dots, X_N;\,w_1,\dots, w_N)$ are data for an extremal Pick interpolation
problem. Let $\phi$ be any interpolant in the Schur class, and let $\bv{\phi}$ denote the radial
boundary-value function of $\phi$. Then, the set $\{\zt\in \Tn : |\bv{\phi}(\zt)| < 1\}$ is contained
in the disjoint union $N\sqcup S$, where $N$ is a set of zero Lebesgue measure and
$S$ is the inner limit of a sequence of proper
semi-analytic subsets of $\Tn$.
\end{theorem}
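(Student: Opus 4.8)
The plan is to feed the extremality of the data into the duality that underlies Theorem~\ref{T:interp-Char}, extracting a sequence of reproducing-kernel vectors in the spaces $\al^2(p)$ that nearly maximise multiplication by $\bv{\phi}$, and then to cover $\{\zt\in\Tn : |\bv{\phi}(\zt)| < 1\}$, up to a null set, by the low sublevel sets of these vectors. To begin: take $\al = A(\D^n)$ and $\dal = \hinf$; then $\dal = \dal(\al)$ by Remark~\ref{rem:features}, the Schur-class interpolant $\phi$ lies in $\dal(\al)$, and we fix $R = 1$. If, for some $t$ with $\max_j|w_j| < t < 1$, the data $(X_1,\dots,X_N;\,w_1/t,\dots,w_N/t)$ admitted an interpolant $G:\D^n\lrarw\D$ in $\dal(\al)$, then $tG$ would interpolate the original data with $\sup_{\D^n}|tG|\le t < 1$, contradicting extremality. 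Hence, by Theorem~\ref{T:interp-Char}, for every such $t$ there is a polynomial $p_t\in\polcl(1)$ for which the matrix in \eqref{E:Pick-sys2}, with each $w_j$ replaced by $w_j/t$ and with $p = p_t$, fails to be positive semidefinite. Choose $t_k\uparrow1$ and put $p_k := p_{t_k}$.

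Because $\delta_k^2\le|p_k|^2\le M_k^2$ on $\Tn$ for suitable constants, $\al^2(p_k)$ coincides as a set of functions with $H^2(\Tn)$, carried with the equivalent inner product $\langle\bcdot,\bcdot\rangle_{|p_k|^2 dm}$; evaluation at $X_j$ is bounded, giving $K_{\al,p_k}(X_j,\bcdot)$. Since $\sup_{\D^n}|\phi|\le1$, multiplication by $\bv{\phi}$ is a contraction $T_k$ on $\al^2(p_k)$ with $T_k^* K_{\al,p_k}(X_j,\bcdot) = \wbar_j K_{\al,p_k}(X_j,\bcdot)$ and $T_k^* h = P_{\al^2(p_k)}(\overline{\bv{\phi}}\,h)$, where $P_{\al^2(p_k)}$ is the orthogonal projection in $\Lb^2(\Tn,|p_k|^2 dm)$ onto $\al^2(p_k)$. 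A direct computation shows that, for $c = (c_1,\dots,c_N)$ and $u := \sum_j c_j K_{\al,p_k}(X_j,\bcdot)$, the quadratic form at $c$ of the Pick matrix (with $w_j$ replaced by $w_j/t$) equals $\|u\|^2_{\al^2(p_k)} - t^{-2}\|T_k^* u\|^2_{\al^2(p_k)}$. The failure of positive semidefiniteness above therefore yields a nonzero $u_k$ in the span of $\{K_{\al,p_k}(X_j,\bcdot) : 1\le j\le N\}$ with $\|T_k^* u_k\| > t_k\|u_k\|$; normalising $\|u_k\|_{\al^2(p_k)} = 1$ and setting $s_k := \|T_k^* u_k\|_{\al^2(p_k)}\in(t_k,1]$ gives $s_k\to1$. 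Since $s_k^2 = \|P_{\al^2(p_k)}(\overline{\bv{\phi}}\,u_k)\|^2\le\int_{\Tn}|\bv{\phi}|^2|u_k|^2|p_k|^2\,dm\le1$, we get $\eta_k := \int_{\Tn}(1 - |\bv{\phi}|^2)|u_k|^2|p_k|^2\,dm\le 1 - s_k^2\to0$. (If $\eta_k = 0$ for some $k$, then $|\bv{\phi}| = 1$ $m$-a.e. where $u_k\neq0$ and the conclusion follows at once from the proper semi-analytic set $\{u_k = 0\}$, so assume henceforth $\eta_k > 0$ for all $k$.)

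The step I expect to be the main obstacle, and where the ``hands-on computations involving $A(\D^n)$'' enter, is to show that the $u_k$ are real-analytic on $\Tn$ — equivalently, that the weighted Szeg\H{o} kernels $K_{\al,p}(X,\bcdot)$, $p\in\polcl(R)$ with $R\ge1$, are real-analytic on $\Tn$. Write $\kappa_X(\zt) := \prod_{i=1}^n(1 - \overline{X_i}\zt_i)^{-1}$ for the Szeg\H{o} kernel of $H^2(\Tn)$ and $T_g$ for the Toeplitz operator on $H^2(\Tn)$ with symbol $g$. Identifying the inner product of $\al^2(p)$ with $\langle T_{|p|^2}\bcdot,\bcdot\rangle_{H^2(\Tn)}$ shows its reproducing kernel at $X$ is $T_{|p|^2}^{-1}\kappa_X$; and since $1/p\in\hinf$ one has $T_{|p|^2}^{-1} = T_{1/p}\,T_{\,\overline{1/p}}$, so $K_{\al,p}(X,\bcdot)\big|_{\Tn} = \tfrac1p\,P_{H^2(\Tn)}\!\big(\overline{1/p}\cdot\kappa_X\big)$. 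Now $\overline{1/p}\cdot\kappa_X$ extends holomorphically to a poly-annular neighbourhood of $\Tn$, hence has geometrically decaying Fourier coefficients; the Riesz projection $P_{H^2(\Tn)}$ preserves this decay, and multiplication by the holomorphic $1/p$ does too, so $K_{\al,p}(X,\bcdot)\big|_{\Tn}$ in fact extends holomorphically to a neighbourhood of $\overline{\D^n}$ and is, in particular, real-analytic on $\Tn$. Consequently each $u_k$, and hence $F_k := |u_k|^2|p_k|^2 = u_k\overline{u_k}\,p_k\overline{p_k}$, is real-analytic on $\Tn$, nonnegative, and not identically zero (as $\int_{\Tn}F_k\,dm = 1$).

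Finally, for each $k$ set $T_k := \{\zt\in\Tn : F_k(\zt)\le\sqrt{\eta_k}\}$; since $\sup_{\Tn}F_k\ge\int_{\Tn}F_k\,dm = 1 > \sqrt{\eta_k}$ for large $k$, this is a proper semi-analytic subset of $\Tn$. Writing $E_{1/m} := \{\zt\in\Tn : 1 - |\bv{\phi}(\zt)|^2\ge1/m\}$, one has $F_k\le m(1 - |\bv{\phi}|^2)F_k$ on $E_{1/m}$, so $\int_{E_{1/m}}F_k\,dm\le m\eta_k$, and Chebyshev's inequality gives $m\big(E_{1/m}\setminus T_k\big)\le m\sqrt{\eta_k}$. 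Passing to a subsequence $(k_l)$ with $\sqrt{\eta_{k_l}}\le2^{-l}$, the Borel--Cantelli lemma yields $m\big(E_{1/m}\setminus\liminf_l T_{k_l}\big) = 0$ for every $m$; since $\{\zt\in\Tn : |\bv{\phi}(\zt)| < 1\} = \bigcup_{m\ge1}E_{1/m}$, this set differs from a subset of $S := \liminf_l T_{k_l}$ by a Lebesgue-null set. Taking $N := \{|\bv{\phi}| < 1\}\setminus S$ gives the decomposition into $N\sqcup S$, with $S$ the inner limit of the sequence $(T_{k_l})$ of proper semi-analytic subsets of $\Tn$. The duality input and the reproducing-kernel optimisation in the first two paragraphs are routine adaptations of the machinery already developed; what requires care is the real-analyticity of the kernels above and, in the last step, the choice of the thresholds $\sqrt{\eta_k}$ and the subsequence, which is exactly what lets us control the \emph{exact} set $\{|\bv{\phi}| < 1\}$ rather than only the sets $\{|\bv{\phi}| < 1 - \eps\}$.
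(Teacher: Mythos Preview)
Your argument is correct, but it takes a genuinely different route from the paper's. The paper does \emph{not} pass through Theorem~\ref{T:interp-Char}: instead it uses the duality of Lemma~\ref{L:isom2} directly. From $\|[\phi]\|=1$ and the finite-dimensionality of $\hinf/\idl$, it finds a single norming element $f_0\in\lprp{\!\idl}$ with $\qnrm{[f_0]}=1$ and $\int_{\Tn}\bv{\phi}f_0\,dm=1$; Proposition~\ref{P:SzegoRep} together with the explicit description \eqref{E:leftAnnH} of $\lprp{\hinf}$ then produces, for each $\eps>0$, a representative $G_\eps$ of $[f_0]$ that is the restriction to $\Tn$ of an antiholomorphic function on a poly-annulus and satisfies $1\le\|G_\eps\|_1<1+\eps$. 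The semi-analytic sets are then simply $S_k=\{|G_{1/k^6}|<1/k\}$, and the endgame (Chebyshev plus Borel--Cantelli) is the same as yours. By contrast, you feed extremality back into the Pick-matrix characterization of Theorem~\ref{T:interp-Char}, extract near-maximising kernel vectors $u_k$ in $\al^2(p_k)$, and take sublevel sets of $F_k=|u_kp_k|^2$. The paper's approach is more self-contained (no circular appeal to Theorem~\ref{T:interp-Char}) and the regularity of $G_\eps$ is immediate from its explicit form; your approach has the virtue of displaying Theorem~\ref{T:non-mod1Set} as a corollary of Theorem~\ref{T:interp-Char}, and your Toeplitz computation in fact yields the closed formula $K_{A(\D^n),\,p}(X,\bcdot)=\kappa_X\big/\!\big(p\,\overline{p(X)}\big)$ (since $P_{H^2(\Tn)}(\overline{1/p}\cdot\kappa_X)=\overline{1/p(X)}\,\kappa_X$), which makes the real-analyticity step shorter than you indicate.
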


The proofs of Theorems~\ref{T:interp-Char} and \ref{T:non-mod1Set} will be presented in
Sections~\ref{S:interpCond} and \ref{S:non-mod1Set_proof}, respectively. The proof of
Proposition~\ref{P:closure_wes} will be the subject of Section~\ref{SS:charac_weak}.
However, we shall
need a few standard facts and a couple of essential propositions before we can give these proofs.
Section~\ref{S:funcAnal} will be devoted to matters that are primarily functional-analytic in character.
Section~\ref{S:funcThr} will be devoted to matters
pertaining to function theory in several complex variables.
\medskip

\section{On duality and the weak${}^{{\boldsymbol{*}}}$ topology}\label{S:funcAnal}

This section is intended to present several results, which are primarily functional-analytic 
in character, that we will need in the proofs of our theorems. Along the way, we shall explain
a few terms that had appeared in Section~\ref{S:intro} and whose discussion had been deferred.

\subsection{Szeg{\H{o}} kernels associated to Hilbert spaces on $\boldsymbol{\Tn}$}\label{SS:intro_szeg_ker}

We adopt the notation introduced in Section~\ref{S:intro}. Let $\al$ be a
uniform subalgebra of $A(\D^n)$, $g\in\Lb^{\infty}(\Tn)$ be such that $|g| > c_g$ for some constant
$c_g > 0$, and
let $\al^{2}(g)$ be as defined in Section~\ref{S:intro}. By construction, $\al^{2}(g)$
is a separable Hilbert space with the inner product
\[
\langle\psi,\varphi\rangle_g\,:=\,\inttor\!\psi\bar{\varphi}|g|^2 dm.
\]
In this paper, for any $\varphi\in\Lb^{1}(\Tn)$, we shall write
\[
\poi[\varphi]\,:=\,\text{the Poisson integral of $\varphi$}.
\]

By the properties of $g$, $\varphi\in\Lb^1(\Tn)$ whenever $\varphi\in\al^2(g)$.
Thus, for every $x\in\D^n$, we can define ${\sf eval}_{x}:\al^2(g)\lrarw \mathbb{C}$ by 
\[
{\sf eval}_{x}(\varphi)\,:=\,\poi[\varphi](x).
\]

It is routine to show that ${\sf eval}_x$ is a bounded linear functional for each
$x\in\D^n$. Hence, by the Riesz representation theorem, there exists a function in
$\al^2(g)$, which we shall denote in this paper by $\overline{K_{\al,\,g}}(x, \bcdot):\Tn\lrarw \C$, such that
\[
 {\sf eval}_{x}(\varphi)\,=\,\langle \varphi, \overline{K_{\al,\,g}}(x, \bcdot)\rangle_g\,.
\]
We call $K_{\al,\,g}(x, \bcdot)$ the {\em Szeg{\H{o}} kernel associated to $\al^2(g)$}.
\smallskip

\subsection{General functional analysis}\label{SS:gen_functanal}

In this subsection we state a couple of results that are perhaps not widely seen in the form
that we need (especially by readers who specialize in complex geometry or function theory).
The results themselves are very standard, and we shall only write a line or two about 
their proofs. For the first such result, we first recall: if $X$ is a Banach space, $S$ is a subspace of 
$X$ and $L$ is a subspace of $X^{*}$, then 
\begin{align*}
\rprp{S}\,&:=\,\{\lambda\in X^{*}:\lambda(x)=0 \ \ \forall x\in S\},\\
\lprp{L}\,&:=\,\{x\in X:\lambda(x)=0 \ \ \forall \lambda\in L\}.
\end{align*}

\begin{lemma}\label{L:isom1}
Let $X$ be a Banach space and $E$, $S$ be closed subspaces of $X$ with $E\subseteq S$. Let
$q:S\lrarw S/E$ be the quotient map. For each $F\in (S/E)^{*}$, the map 
\[
\Theta\,:\,F\longmapsto \widetilde{F\circ q}+\rprp{S},
\]
where $\widetilde{F\circ q}$ is any (fixed) norm-preserving $\C$-linear extension of $F\circ q$ to $X$,
is well defined and is an isometric isomorphism from $(S/E)^{*}$ to ${\rprp{E}}/{\rprp{S}}$.
\end{lemma}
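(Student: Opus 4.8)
The plan is to verify, in turn, that $\Theta$ is well defined, linear, norm-preserving (hence injective), and surjective onto $\rprp{E}/\rprp{S}$. First I would check that $\Theta$ lands in $\rprp{E}/\rprp{S}$: if $\widetilde{F\circ q}$ is a Hahn--Banach extension of $F\circ q$ to all of $X$, then for $x\in E$ we have $\widetilde{F\circ q}(x) = F(q(x)) = F(0) = 0$, so $\widetilde{F\circ q}\in\rprp{E}$ and the coset $\widetilde{F\circ q}+\rprp{S}$ makes sense in $\rprp{E}/\rprp{S}$. For well-definedness I must show the coset does not depend on the choice of norm-preserving extension: if $g_1,g_2$ are two such extensions, then $g_1 - g_2$ vanishes on $S$ (since both agree with $F\circ q$ there), so $g_1 - g_2\in\rprp{S}$ and the two cosets coincide. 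Linearity of $\Theta$ is immediate once one observes that a norm-preserving extension of $F_1\circ q + F_2 \circ q$ differs from the sum of norm-preserving extensions of $F_i\circ q$ by an element of $\rprp{S}$ (again because the difference vanishes on $S$), so linearity holds modulo $\rprp{S}$.

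Next I would establish that $\Theta$ is isometric. For $\|\Theta(F)\|\leq\|F\|$: the quotient norm gives $\|\widetilde{F\circ q}+\rprp{S}\| \leq \|\widetilde{F\circ q}\|_{X^*} = \|F\circ q\|_{S^*}$, and since $q:S\to S/E$ is a quotient map with $\|q\|\le 1$ one has $\|F\circ q\|_{S^*}\leq\|F\|_{(S/E)^*}$. For the reverse inequality $\|F\|\leq\|\Theta(F)\|$: pick any $\lambda\in\widetilde{F\circ q}+\rprp{S}$, i.e.\ $\lambda\in\rprp{E}$ with $\lambda|_S = F\circ q$ (note every representative of the coset restricts to $S$ as $F\circ q$, since they differ by elements of $\rprp{S}$); such $\lambda$ vanishes on $E$ and so descends to a functional $\overline{\lambda}$ on $S/E$ with $\|\overline{\lambda}\|_{(S/E)^*}\leq\|\lambda\|_{X^*}$, and by construction $\overline{\lambda} = F$. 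Taking the infimum over $\lambda$ in the coset yields $\|F\|_{(S/E)^*}\leq\|\Theta(F)\|$. Combining, $\Theta$ is isometric, hence injective.

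For surjectivity I would start from an arbitrary $\lambda\in\rprp{E}$; restricting to $S$ gives $\lambda|_S\in S^*$ which vanishes on $E$, hence factors through $q$ as $\lambda|_S = F\circ q$ for a unique $F\in(S/E)^*$. Then a Hahn--Banach norm-preserving extension of $F\circ q$ to $X$ differs from $\lambda$ by an element of $\rprp{S}$ (the difference vanishes on $S$), so $\Theta(F) = \widetilde{F\circ q}+\rprp{S} = \lambda + \rprp{S}$, which is the image of the given coset. Thus $\Theta$ is onto $\rprp{E}/\rprp{S}$.

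None of these steps presents a genuine obstacle — the lemma is a packaging of standard Hahn--Banach duality — but the one point demanding care is the bookkeeping around \emph{which} functional a coset representative restricts to on $S$: the cleanest route is to observe at the outset that all representatives of a coset in $\rprp{E}/\rprp{S}$ have the same restriction to $S$, which makes both the isometry lower bound and the surjectivity argument transparent. I would also remark that the existence of norm-preserving extensions is exactly the Hahn--Banach theorem, so no separability or reflexivity hypothesis is needed.
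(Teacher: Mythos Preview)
Your proof is correct and complete; each step (well-definedness via $\rprp{S}$, the two-sided norm estimate using Hahn--Banach and the factoring of representatives through $q$, and surjectivity) is sound. The paper itself does not give a proof, declaring it ``utterly standard'' and referring to \cite[Theorem~4.9]{rudin:Func-Ana91}, so your argument is exactly the kind of routine verification the authors had in mind.
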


The proof is utterly standard and runs along the lines of,
for instance, \cite[Theorem~4.9]{rudin:Func-Ana91}.
\smallskip

The second result of this subsection is about the dual of the space of trace class operators $\mathscr{T}(H)$,
where $H$ and $\mathcal{B}(H)$ are as in Section~\ref{S:intro}. Our presentation will be very brief,
and the reader is referred to \cite[Chapter~3, \S18]{conway:OpTh99}
for details of the concepts discussed below.
\smallskip

Given $T\in\mathcal{B}(H)$, write $|T| := (T^{*}T)^{\frac{1}{2}}$. If we fix an orthonormal basis
$\{e_j:j\in\mathbb{N}\}$ of $H$, the quantity
\begin{equation}
\sum_{j\in\mathbb{N}}\langle|T|e_j,\,e_j\rangle
\label{E:def_tr}
\end{equation}
is independent of the choice of the orthonormal basis $\{e_j:j\in\mathbb{N}\}$.
The space of {\em trace class operators}, denoted by $\mathscr{T}(H)$,
consists of operators $T\in\mathcal{B}(H)$ for which the quantity in \eqref{E:def_tr} is finite.
Thus, for a fixed $T\in\mathscr{T}(H)$, we have a number
\begin{equation}\label{E:tr_norm}
\|T\|_{\text{tr}}\,:=\,\sum_{j\in\mathbb{N}}\langle|T|e_j,\,e_j\rangle
\end{equation}
(where $\{e_j:j\in\mathbb{N}\}$ is some orthonormal basis). It is a fact that \eqref{E:tr_norm} defines
a norm and that $\mathscr{T}(H)$ is a Banach space with this norm.
\smallskip

We will need the concept of the trace of an operator in $\mathcal{B}(H)$. One fixes some orthonormal
basis on $H$ and attempts a definition as one would for a finite-dimensional $H$.
Convergence and independence of the choice of orthonormal basis hold true for any $T\in \mathscr{T}(H)$.
For any such $T$, we denote the trace by  ${\sf trace}(T)$. We will not spell out an expression for 
${\sf trace}(T)$\,---\,we refer the reader to  \cite[Chapter 3, \S18]{conway:OpTh99}. What follows from
the above procedure is that
\begin{equation}\label{E:trace_trnorm}
|\mathsf{trace}(T)|\,\leq\, \|T\|_{\text{tr}} \; \; \; \forall T\in \mathscr{T}(H).
\end{equation}

It turns out that $\mathscr{T}(H)$ is a two-sided ideal of $\mathcal{B}(H)$.
Moreover, given $T\in \mathcal{B}(H)$ and $A\in \mathscr{T}(H)$ we have:
\begin{equation}\label{E:prod_trnorm}
\|TA\|_{\text{tr}}\,\leq\,\|T\|_{\text{op}}\|A\|_{\text{tr}} \quad\text{and}
\quad \|AT\|_{\text{tr}}\,\leq\,\|T\|_{\text{op}}\|A\|_{\text{tr}},
\end{equation} 
where $\|T\|_{\text{op}}$ represents the operator norm of $T$. 
Because of the inequalities above, each $T\in\mathcal{B}(H)$ induces a linear functional
$L_T\in(\mathscr{T}(H))^{*}$ defined by $L_T(A) := \mathsf{trace}(TA)$.

\begin{result}\label{R:Dual_traceclass}
The map $\Lambda:\mathcal{B}(H)\lrarw (\mathscr{T}(H))^{*}$ defined by
\[
\Lambda(T)\,:=\,L_{T} \; \; \; \forall T\in \mathcal{B}(H),
\]
where $L_{T}$ is defined by $L_{T}(A) := \mathsf{trace}(TA) \ \forall A\in\mathscr{T}(H)$, gives an
isometric isomorphism of $\mathcal{B}(H)$ onto $(\mathscr{T}(H))^{*}$.
\end{result}

The above is a standard result; see, for instance, \cite[Theorem 19.2]{conway:OpTh99}.
\smallskip 

We end this subsection by reminding ourselves of rank-one operators, which will be
one of the tools for establishing a key result of the next section. Given $x$, $y\in H$,
we define the rank-one operator $x\otimes y$ as
\[ 
x\otimes y(v)\,:=\,\langle v, y\rangle x \; \; \; \forall v\in H.
\]
It is not hard to see that
$x\otimes y\in\mathscr{T}(H)$. Also, we have
\begin{equation}\label{E:trace}
\|x\otimes y\|_{\text{tr}}\,=\,\|x\|_{H} \|y\|_{H} \quad \text{and}
\quad {\sf trace}(x\otimes y)\,=\,\langle x, y\rangle.
\end{equation}
\smallskip

\section{Closure in the weak${}^{{\boldsymbol{*}}}$ 
topology on $\hinf$}\label{SS:charac_weak}

This section is devoted to providing a simple description of the $\wes$ closure of a uniform
subalgebra $\al\subseteq A(\D^n)$ (and more). Results of this kind are not entirely straightforward. 
McCullough presents results of this nature for a {\em certain class} of uniform algebras on general compact
Hausdorff spaces in \cite[Section~5]{mccullough:NPtida96}.
Using quite different methods, based on the Krein--{\v{S}}mulian theorem, we describe the
$\wes$ closure of any linear subspace of $A(\D^n)$. 
\smallskip

To be precise about the meaning of ``$\wes$'' here: with $H$ as in Section~\ref{S:funcAnal},
$\mathcal{B}(H)$ is endowed with the $\wes$ topology
that it acquires as the dual space of $\mathscr{T}(H)$\,---\,which follows from Result~\ref{R:Dual_traceclass}.
We recall that a net
$\{T_{\alpha}:\alpha\in J\}$, $J$ being a directed set, in $\mathcal{B}(H)$ converges to
$T\in\mathcal{B}(H)$ in the $\wes$ topology if and only if $\{{\sf trace}(T_{\alpha}A) : \alpha\in J\}$
converges to ${\sf trace}(TA)$ for every $A\in \mathscr{T}(H)$.
\smallskip

At this juncture, we shall fix our Hilbert space $H$ to be $H^2(\Tn)$.
Each $\varphi\in \hinf$ defines a {\em multiplier operator} $M_{\varphi}\in \mathcal{B}(H^2(\Tn))$ as follows.
It follows from a result of Marcinkiewicz and Zygmund on multiple
Poisson integrals that for any bounded function $u$
on $\D^n$, $n\geq 2$, that is harmonic in each variable separately, the limit
\begin{equation}\label{E:Fatou_style}
 \lim_{r\to 1^-}u(r\zt)\,=:\,\bv{u}(\zt), \; \; \zt\in \Tn, \; \text{exists for $m$-a.e. $\zt\in \Tn$};
\end{equation}
see \cite[Section~2.3]{rudin:ftp69}. When $n = 1$, the latter statement is the classical theorem of
Fatou. Furthermore, $\bv{u}$ is of class
$\Lb^\infty(\Tn)$, $n\geq 1$, and satisfies
\begin{equation}\label{E:poi_reproduces}
 u\,=\,\poi[\bv{u}] \quad\text{and}
 \quad \|\bv{u}\|_{\Lb^\infty(\Tn)}\,=\,\sup\nolimits_{\D^n}|u|.
\end{equation}
Since any holomorphic function on $\D^n$ is harmonic in each variable separately, it follows that to each
$\varphi\in \hinf$ is associated the radial boundary-value function $\bv{\varphi}$, which establishes
an isometry of $\hinf$ into $\Lb^\infty(\Tn)$. With these facts, we have
\begin{equation}\label{E:opnorm_mult}
 M_{\varphi}(h)\,:=\,\bv{\varphi}h \; \; \; \forall h\in H^2(\Tn) \; \quad\text{and}
 \quad \; \|M_{\varphi}\|_{\text{op}}\,=\,\sup\nolimits_{\D^n}|\varphi|.
\end{equation}

Identifying $\varphi$ with $M_{\varphi}$, we see
that $\hinf\hookrightarrow \mathcal{B}(H^2(\Tn))$.
In the $\wes$ topology on $\mathcal{B}(H^2(\Tn))$ viewed as the dual space of 
$\mathscr{T}(H^2(\Tn))$, $\hinf$ is $\wes$ closed\,---\,see \cite[Lemma~3.6]{mccullough:NPtida96}. 
We would like to understand better the $\wes$ topology restricted to $\hinf$.
\smallskip

In view of the above discussion, when we ascribe to subsets of
$\hinf$ properties of the $\wes$ topology, it will be understood that the discussion is
about the image of those subsets under the embedding $\hinf\hookrightarrow \mathcal{B}(H^2(\Tn))$.
\smallskip

In what follows, we shall abbreviate the inner product $\langle\bcdot, \bcdot\rangle_1$\,---\,see 
our notation in subsection~\ref{SS:intro_szeg_ker}\,---\,simply to $\langle\bcdot, \bcdot\rangle$. Similarly,
the classical Szeg{\H{o}} kernel: i.e., the Szeg{\H{o}} kernel associated to
$H^2(\Tn)$\,---\,which would be $K_{A(\D^n),\,1}(x, \bcdot)$ in the notation of
subsection~\ref{SS:intro_szeg_ker}\,---\,will be denoted by $K(x, \bcdot)$, $x\in \D^n$.

\begin{lemma}\label{L:Weaknet_pointwise}
Let $\{\varphi_{\alpha}\}_{\alpha\in J}$ be a weak${}^{{\boldsymbol{*}}}$ convergent net in $\hinf$.
Then $\exists\varphi\in\hinf$ such that $\{\varphi_{\alpha}(x)\}_{\alpha\in J}$ converges to $\varphi(x)$
for every $x\in\D^n$.
\end{lemma}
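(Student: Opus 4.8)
The plan is to exploit the fact that $\wes$ convergence of the net $\{\varphi_\alpha\}$, by definition, means $\{\mathsf{trace}(M_{\varphi_\alpha}A)\}_{\alpha\in J}$ converges for every trace class operator $A$, and to feed this rank-one operators of the form $K(x,\bcdot)\otimes K(x,\bcdot)$, or more precisely pairs $h\otimes K(x,\bcdot)$ for suitable $h\in H^2(\Tn)$. First I would recall from subsection~\ref{SS:intro_szeg_ker} that for $\varphi\in\hinf$ and $x\in\D^n$ the value $\varphi(x)$ is recovered via the reproducing property: $\varphi(x) = \poi[\bv{\varphi}](x) = \langle \bv{\varphi}, \overline{K}(x,\bcdot)\rangle = \langle M_\varphi \mathbf{1}, \overline{K}(x,\bcdot)\rangle$, since $M_\varphi\mathbf{1} = \bv{\varphi}\cdot\mathbf{1} = \bv{\varphi}$ (here $\mathbf{1}\in H^2(\Tn)$ is the constant function $1$, and I am using that $\overline{K}(x,\bcdot)$ lies in $H^2(\Tn)$). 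Using \eqref{E:trace}, this is exactly $\mathsf{trace}\big(M_\varphi\,(\mathbf{1}\otimes \overline{K}(x,\bcdot))\big)$, with $\mathbf{1}\otimes\overline{K}(x,\bcdot)\in\mathscr{T}(H^2(\Tn))$.

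Next, from this identity it follows immediately that for the fixed $x\in\D^n$ the net of scalars $\varphi_\alpha(x) = \mathsf{trace}\big(M_{\varphi_\alpha}(\mathbf{1}\otimes\overline{K}(x,\bcdot))\big)$ converges in $\C$; call the limit $\Phi(x)$. So the pointwise limit function $\Phi:\D^n\to\C$ exists. The remaining task is to show $\Phi\in\hinf$, i.e.\ that $\Phi$ is holomorphic and bounded. Boundedness is easy: the $\wes$ topology on the dual of a Banach space has bounded weak${}^*$-compact neighborhoods, and a $\wes$ convergent net is $\wes$ bounded, so by the uniform boundedness principle $\sup_\alpha\|M_{\varphi_\alpha}\|_{\mathrm{op}} = \sup_\alpha\sup_{\D^n}|\varphi_\alpha| =: C < \infty$; hence $|\Phi(x)|\leq C$ for all $x$. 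For holomorphy I would argue that $\Phi$ is a locally uniform limit, or at least locally bounded pointwise limit, of holomorphic functions: since $\{\varphi_\alpha\}$ is a uniformly bounded family of holomorphic functions on $\D^n$, it is a normal family, so it has locally uniformly convergent subnets; any such subnet converges locally uniformly to a holomorphic function which must agree with the pointwise limit $\Phi$. Thus $\Phi$ is holomorphic, and bounded, so $\Phi\in\hinf$. Setting $\varphi := \Phi$ finishes the proof.

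The only genuinely delicate point is the passage from ``pointwise convergent, uniformly bounded net of holomorphic functions'' to ``the limit is holomorphic'': for \emph{nets} rather than sequences one cannot invoke Montel's theorem verbatim, since subnets of an uncountable net need not reduce to sequences and the normal families argument must be phrased correctly. I would handle this by working locally on a polydisc $\overline{D(x_0;r)}^n\subset\D^n$, using the Cauchy integral representation: each $\varphi_\alpha$ is given on $D(x_0;r)^n$ by integration of its boundary values against the Cauchy kernel, these boundary values are uniformly bounded, and the net of integrands converges pointwise and boundedly (by the same rank-one-operator trick applied at boundary-adjacent points, or more simply by noting pointwise convergence on a slightly larger polydisc together with uniform boundedness); then dominated convergence for the net — valid because the bound is uniform — shows $\Phi$ is itself a Cauchy-type integral of a bounded function, hence holomorphic. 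Alternatively, and perhaps more cleanly, one observes that holomorphy is detectable through vanishing of $\bar\partial$ in the distributional sense, and boundedly-convergent nets of $L^\infty_{\mathrm{loc}}$ functions converge in $\mathscr{D}'$, so the limit of functions annihilated by $\bar\partial$ is annihilated by $\bar\partial$; elliptic regularity then gives holomorphy of $\Phi$. I expect this holomorphy step to be the main obstacle; everything else is a routine unwinding of the definitions in Section~\ref{S:funcAnal}.
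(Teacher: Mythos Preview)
Your rank-one computation $\varphi_\alpha(x) = \mathsf{trace}\big(M_{\varphi_\alpha}(\mathbf{1}\otimes\overline{K}(x,\bcdot))\big)$ is exactly the device the paper uses, so that part matches. The difference is in how you produce the limit function $\varphi\in\hinf$. The paper does this in one line: it invokes the fact, recorded just before the lemma, that $\hinf$ is $\wes$ closed in $\mathcal{B}(H^2(\Tn))$. Hence the $\wes$ limit of $\{M_{\varphi_\alpha}\}$ is automatically $M_\varphi$ for some $\varphi\in\hinf$, and the trace identity then gives $\varphi_\alpha(x)\to\varphi(x)$ immediately. No separate boundedness or holomorphy argument is needed.

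Your route instead tries to build $\Phi$ as a pointwise limit and then verify $\Phi\in\hinf$ by hand, and here there is a real gap. You assert that a $\wes$ convergent net is $\wes$ bounded and then apply the uniform boundedness principle to get $\sup_\alpha\|M_{\varphi_\alpha}\|_{\mathrm{op}}<\infty$. This is false for nets: a convergent net of scalars need not be bounded (only eventually bounded), so a $\wes$ convergent net in a dual space need not be pointwise bounded, and the uniform boundedness principle does not apply. Consequently your Montel/normal-families argument, your dominated-convergence argument for the Cauchy integral, and your distributional $\bar\partial$ argument all lose their hypothesis. The paper sidesteps the issue entirely because $\wes$ closedness of $\hinf$ hands you the limit $\varphi\in\hinf$ for free; that is the observation you are missing.
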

\begin{proof}
As $\hinf$ is $\wes$ closed and $\{\varphi_{\alpha}\}_{\alpha\in J}$ is $\wes$
convergent, $\exists\varphi\in \hinf$ such that $\{\varphi_\alpha\}_{\alpha\in J}$ converges to 
$\varphi$ in the $\wes$ topology. This implies
that $\{{\sf trace}(M_{\varphi_\alpha}A)\}_{\alpha\in J}$ converges to ${\sf trace}(M_{\varphi}A)$ for every 
$A\in \mathscr{T}(H^2(\Tn))$. Take $A = 1\otimes K(x, \bcdot)$.
Then, by \eqref{E:trace}:
\begin{align}
\mathsf{trace}(M_{\varphi_\alpha}[1\otimes K
(x, \bcdot)])\,&=\,\mathsf{trace}(\varphi_\alpha\otimes K(x, \bcdot)) \label{E:tr_interim} \\
&=\,\langle\varphi_\alpha,K(x, \bcdot)\rangle \notag \\
&=\,\varphi_\alpha(x). \notag
\end{align}
Similarly $\mathsf{trace}(M_{\varphi} [1\otimes K(x, \bcdot)])=\varphi(x)$. By the
discussion preceding \eqref{E:tr_interim}, the lemma follows.
\end{proof}

Our next result gives a characterization of $\wes$ convergent sequences in $\hinf$. Before
we present it we note: by the fact that
\begin{itemize}
 \item the class of all finite-rank operators in $\mathscr{T}(H)$, and
 \item the set $\{K(x, \bcdot) : x\in\D^n\}$ in $H^2(\Tn)$
\end{itemize}
are dense in their respective norms, it follows that the finite-rank operators of the form
$\sum_{j=1}^{M}f_j\otimes K(\pj, \bcdot)$,
where $f_j\in H^2(\Tn)$ and $\pj\in\D^n$, are dense in $\mathscr{T}(H^2(\Tn))$.
\smallskip

Just the ``only if'' implication of the following proposition is needed to establish Proposition~\ref{P:closure_wes}.
However, we present a characterization of $\wes$ convergent sequences in $\hinf$, as it may be of
independent interest.

\begin{proposition}\label{P:Weakseq_charac}
Let $\{\varphi_{\nu}\}_{\nu\in\mathbb{N}}$ be a sequence in $\hinf$. Then
$\{\varphi_{\nu}\}_{\nu\in\mathbb{N}}$ is weak${}^{{\boldsymbol{*}}}$ convergent if and only if
\begin{itemize}
\item[$(i)$]$\sup\{\sup_{\D^n}|\varphi_{\nu}| : \nu\in\mathbb{N}\} < \infty$; and
\item[$(ii)$]$\{\varphi_{\nu}(x)\}_{\nu\in\mathbb{N}}$ converges to $\varphi(x)$
for some $\varphi\in\hinf$ and for each $x\in\D^n$.
\end{itemize}
\end{proposition}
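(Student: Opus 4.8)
The plan is to prove both implications, with the ``only if'' direction being the one actually needed downstream (so I would be careful to make that argument self-contained), and the ``if'' direction packaged as a converse of independent interest.

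\medskip

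\noindent\textbf{The ``only if'' direction.} Suppose $\{\varphi_\nu\}_{\nu\in\nat}$ is $\wes$ convergent. Condition $(ii)$ is immediate from Lemma~\ref{L:Weaknet_pointwise} (applied to the net indexed by $\nat$): there is $\varphi\in\hinf$ with $\varphi_\nu(x)\to\varphi(x)$ for every $x\in\D^n$. For condition $(i)$, the natural tool is the uniform boundedness principle. A $\wes$ convergent sequence in $\mathcal{B}(H^2(\Tn))=(\mathscr{T}(H^2(\Tn)))^*$ is, in particular, pointwise bounded on $\mathscr{T}(H^2(\Tn))$: for each fixed $A\in\mathscr{T}(H^2(\Tn))$ the scalar sequence $\{\mathsf{trace}(M_{\varphi_\nu}A)\}_\nu$ converges, hence is bounded. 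Since $\mathscr{T}(H^2(\Tn))$ is a Banach space, Banach--Steinhaus yields $\sup_\nu\|M_{\varphi_\nu}\|_{\mathrm{op}}<\infty$, and then \eqref{E:opnorm_mult} gives $\sup_\nu\sup_{\D^n}|\varphi_\nu|=\sup_\nu\|M_{\varphi_\nu}\|_{\mathrm{op}}<\infty$, which is $(i)$.

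\medskip

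\noindent\textbf{The ``if'' direction.} Now assume $(i)$ and $(ii)$; set $C:=\sup_\nu\sup_{\D^n}|\varphi_\nu|$ and let $\varphi$ be the pointwise limit, with $\sup_{\D^n}|\varphi|\leq C$ as well. I want to show $\mathsf{trace}(M_{\varphi_\nu}A)\to\mathsf{trace}(M_{\varphi}A)$ for every $A\in\mathscr{T}(H^2(\Tn))$. The key reduction is the density remark made in the excerpt just before the proposition: finite-rank operators of the form $\sum_{j=1}^M f_j\otimes K(\pj,\bcdot)$, with $f_j\in H^2(\Tn)$ and $\pj\in\D^n$, are dense in $\mathscr{T}(H^2(\Tn))$ in the trace norm. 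On such an operator, using \eqref{E:trace} and the reproducing property exactly as in the computation \eqref{E:tr_interim} in the proof of Lemma~\ref{L:Weaknet_pointwise}, one gets
\[
 \mathsf{trace}\Big(M_{\varphi_\nu}\textstyle\sum_{j=1}^M f_j\otimes K(\pj,\bcdot)\Big)
 \,=\,\sum_{j=1}^M\langle M_{\varphi_\nu}f_j,\,K(\pj,\bcdot)\rangle
 \,=\,\sum_{j=1}^M(\varphi_\nu f_j)(\pj)\,=\,\sum_{j=1}^M\varphi_\nu(\pj)\,f_j(\pj),
\]
(using that $M_{\varphi_\nu}f_j$ is, as a holomorphic function on $\D^n$, the product $\varphi_\nu f_j$), which converges as $\nu\to\infty$ to $\sum_{j=1}^M\varphi(\pj)f_j(\pj)=\mathsf{trace}(M_\varphi\sum_j f_j\otimes K(\pj,\bcdot))$ by $(ii)$. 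So the trace functionals converge on a dense subset. To upgrade to all of $\mathscr{T}(H^2(\Tn))$, I use the uniform bound from $(i)$: by \eqref{E:opnorm_mult}, \eqref{E:trace_trnorm} and \eqref{E:prod_trnorm}, $|\mathsf{trace}(M_{\varphi_\nu}A)-\mathsf{trace}(M_{\varphi_\nu}A')|\leq\|M_{\varphi_\nu}(A-A')\|_{\mathrm{tr}}\leq C\|A-A'\|_{\mathrm{tr}}$, and likewise for $\varphi$; an $\eps/3$ argument then shows $\{\mathsf{trace}(M_{\varphi_\nu}A)\}_\nu$ is Cauchy for every $A\in\mathscr{T}(H^2(\Tn))$. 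Its limit defines a linear functional on $\mathscr{T}(H^2(\Tn))$ which is bounded (by $C\|A\|_{\mathrm{tr}}$), hence equals $L_T$ for a unique $T\in\mathcal{B}(H^2(\Tn))$ by Result~\ref{R:Dual_traceclass}; thus $\varphi_\nu\to T$ in $\wes$, and since $\hinf$ is $\wes$ closed, $T=M_\psi$ for some $\psi\in\hinf$. (In fact $\psi=\varphi$, testing against $1\otimes K(x,\bcdot)$, though this is not needed for the statement.)

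\medskip

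\noindent I expect the main obstacle to be purely bookkeeping: making sure the density-plus-uniform-bound $\eps/3$ passage is watertight, and that the identification $M_{\varphi_\nu}f_j=\varphi_\nu f_j$ as elements of $H^2(\Tn)$ is correctly invoked (this is where the multiplier structure \eqref{E:opnorm_mult} enters). No deep idea is required beyond Banach--Steinhaus for $(i)$ and the density reduction for the converse; everything else is a routine assembly of the facts catalogued in Section~\ref{S:funcAnal}.
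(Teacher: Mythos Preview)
Your proof is correct and follows essentially the same approach as the paper: the ``only if'' direction via Lemma~\ref{L:Weaknet_pointwise} and Banach--Steinhaus, and the ``if'' direction via the density of the special finite-rank operators together with an $\eps/3$ argument exploiting the uniform bound from $(i)$. The only cosmetic difference is that in the converse you first show the trace sequence is Cauchy and then identify the limit operator via Result~\ref{R:Dual_traceclass}, whereas the paper runs the $\eps/3$ argument directly against $\mathsf{trace}(M_\varphi A)$ (since the limit on the dense set is already known to be $\mathsf{trace}(M_\varphi A')$); your detour is harmless but unnecessary.
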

\begin{proof}
Let $\{\varphi_\nu\}_{\nu\in\mathbb{N}}$ be a $\wes$ convergent sequence.
Then $(ii)$ follows from Lemma~\ref{L:Weaknet_pointwise}. To obtain $(i)$, 
we consider the linear functionals
$L_{M_{\varphi_\nu}}\in (\mathscr{T}(H^2(\Tn)))^{*}$: see Result~\ref{R:Dual_traceclass}.
For a fixed $A\in\mathscr{T}(H^2(\Tn))$, 
the convergence of  $\{\varphi_{\nu}\}$\,---\,hence of $\{{\sf trace}(M_{\varphi_{\nu}}A)\}$\,---\,implies
that the sequence $\{|L_{M_{\varphi_\nu}}(A)|\}$ is bounded.
Hence, it follows from the Uniform Boundedness Principle that $\sup\|L_{M_{\varphi_\nu}}\|_{\text{op}} < \infty$.
But, by Result~\ref{R:Dual_traceclass} and \eqref{E:opnorm_mult}:
\[
 \|L_{M_{\varphi_\nu}}\|_{\text{op}}\,=\,\|M_{\varphi_\nu}\|_{\text{op}}\,=\,\sup\nolimits_{\D^n}|\varphi_\nu|,
\]
whence we have $(i)$.
\smallskip

To establish the converse, we shall use the density of the finite-rank operators 
of the form $\sum_{j=1}^{M}f_j\otimes K(\pj, \bcdot)$ in $\mathscr{T}(H^2(\Tn))$.
So let us consider a sequence $\{\varphi_{\nu}\}\subset \hinf$ for which $(i)$ and $(ii)$ are satisfied.
Let $A:=\sum_{j=1}^{M}f_j\otimes K(\pj, \bcdot)$, $f_j\in H^2(\Tn)$. Then we have:
\begin{align*}
 {\sf trace}(M_{\varphi_\nu}A)\,&=\,{\sf trace}\bigg(\sum_{j=1}^{M}
  \varphi_{\nu}f_j\otimes K(\pj, \bcdot)\bigg)\\
&=\,\sum_{j=1}^{M}\big\langle\varphi_{\nu}f_j, K(\pj, \bcdot)\big\rangle\\
&=\,\sum_{j=1}^{M}\varphi_{\nu}(\pj)f_{j}(\pj).
\end{align*}
By $(ii)$ and the above calculation, it follows that
${\sf trace}(M_{\varphi_\nu}A)$ converges to ${\sf trace}(M_\varphi A)$ for each $A$ having the form 
$\sum_{j=1}^{M}f_j\otimes K(\pj, \bcdot)$, where $f_j\in H^2(\Tn)$ and $\pj\in\D^n$.
\smallskip

Now let $A\in\mathscr{T}(H^2(\Tn))$ be an arbitrary element. Given $\eps>0$ there exist functions
$f_j\in H^2(\Tn)$ and points
$\pj\in\D^n$, $1\leq j\leq M$ such that
\begin{equation}\label{E:eps/3}
 \big\|A-\sum\nolimits_{j=1}^{M}f_j\otimes K(\pj, \bcdot)\big\|_{\text{tr}}\,<\,
 \frac{\eps}{3\big(\sup\nolimits_{\nu\in \nat}\{\sup\nolimits_{\D^n}|\varphi_{\nu}|\}\big)}.
\end{equation}
That the right-hand side above is well defined follows from our assumption $(i)$.
We compute:
\begin{align*}
 |{\sf trace}(M_{\varphi_\nu}A) -
  {\sf trace}(M_{\varphi}A)|\,&\leq\,\Big|{\sf trace}\Big(M_{\varphi_\nu}\Big(
  A - \sum_{j=1}^{M}f_j\otimes K(\pj, \bcdot)\Big)\Big)\Big| \\
 &\quad+\Big|{\sf trace}\Big((M_{\varphi_\nu} - M_{\varphi})\Big(\sum_{j=1}^{M}f_j\otimes K(\pj, \bcdot)\Big)
  \Big)\Big| \\
 &\quad+\Big|{\sf trace}\Big(M_{\varphi}\Big(A - \sum_{j=1}^{M}f_j\otimes K(\pj, \bcdot)\Big)\Big)\Big|.
\end{align*}
Observe that, by \eqref{E:trace_trnorm}, \eqref{E:prod_trnorm}, \eqref{E:opnorm_mult}
and \eqref{E:eps/3}, the first and third terms above are dominated by
$\eps/3$ irrespective of $\nu$. It is now easy to see from the discussion in the preceding paragraph that  
there exists $N_{0}\in \Z_+$ such that the second term is dominated by $\eps/3$ for every $\nu\geq N_{0}$.
Putting these together, we get that ${\sf trace}(M_{\varphi_\nu}A)\!\lrarw {\sf trace}(M_{\varphi}A)$ as
$\nu\to \infty$ for each fixed $A\in\mathscr{T}(H^2(\Tn))$. 
Hence $\varphi_\nu$ converges to $\varphi$ in the $\wes$ topology, and we are done.
\end{proof}

The significance of Proposition~\ref{P:Weakseq_charac} to the goal of this section
is made clear by Lemma~\ref{L:clos_equiv_seqClos} below. Before we state this lemma, we
need the following result.

\begin{result}\label{R:Krein_Smu}
Let $X$ be a separable Banach space and $C$ is a convex subset of $X^{*}$. Then $C$ is weak${}^{{\boldsymbol{*}}}$
closed if and only if it is sequentially weak${}^{{\boldsymbol{*}}}$ closed.
\end{result}

This result is a consequence of the Krein--{\v{S}}mulian Theorem\,---\,see, for instance,
\cite[Chapter 5, \S12]{conway:FuncAna90} and Corollary~12.7 therein.

\begin{lemma}\label{L:clos_equiv_seqClos}
A subspace of $\hinf$ is weak${}^{{\boldsymbol{*}}}$ closed if and only if it is sequentially
weak${}^{{\boldsymbol{*}}}$ closed.
\end{lemma}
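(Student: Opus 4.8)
The plan is to derive Lemma~\ref{L:clos_equiv_seqClos} as a direct application of the Krein--\v{S}mulian machinery, in the guise of Result~\ref{R:Krein_Smu}, with the structural identifications established in Section~\ref{SS:charac_weak} doing the rest of the work. First I would record that the one implication is trivial: any $\wes$ closed set is, in particular, sequentially $\wes$ closed. So the content is the converse.

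For the nontrivial direction, let $V\subseteq\hinf$ be a subspace that is sequentially $\wes$ closed, where, per the conventions fixed above, $\hinf$ is viewed as a subspace of $\mathcal{B}(H^2(\Tn))$ via $\varphi\mapsto M_\varphi$, and $\mathcal{B}(H^2(\Tn))$ carries the $\wes$ topology it inherits as the dual of $\mathscr{T}(H^2(\Tn))$ through Result~\ref{R:Dual_traceclass}. The key point enabling Result~\ref{R:Krein_Smu} is that $\mathscr{T}(H^2(\Tn))$ is a \emph{separable} Banach space: indeed, $H^2(\Tn)$ is separable, and the finite-rank operators with matrix entries drawn from a countable dense subset, relative to a fixed orthonormal basis, are dense in $\mathscr{T}(H^2(\Tn))$ in the trace norm (this is the same density fact already invoked in the proof of Proposition~\ref{P:Weakseq_charac}). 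Next I would note that $\hinf$, being $\wes$ closed in $\mathcal{B}(H^2(\Tn))$ by \cite[Lemma~3.6]{mccullough:NPtida96}, is itself sequentially $\wes$ closed; combined with the hypothesis that $V$ is sequentially $\wes$ closed \emph{in} $\hinf$, one gets that $V$ is sequentially $\wes$ closed in the ambient $\mathcal{B}(H^2(\Tn))\cong(\mathscr{T}(H^2(\Tn)))^{*}$. Finally, $V$ is convex (being a subspace), so Result~\ref{R:Krein_Smu} applies verbatim with $X=\mathscr{T}(H^2(\Tn))$ and $C=V$, yielding that $V$ is $\wes$ closed.

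One small bookkeeping step I would make explicit: sequential $\wes$ closedness in $\hinf$ and in $\mathcal{B}(H^2(\Tn))$ coincide for subsets of $\hinf$. If $\{\varphi_\nu\}\subset V$ is a sequence converging $\wes$ in $\mathcal{B}(H^2(\Tn))$ to some $T$, then since $\hinf$ is $\wes$ closed, $T=M_\varphi$ for some $\varphi\in\hinf$; the convergence then takes place within $\hinf$, so the hypothesis on $V$ forces $\varphi\in V$. Hence $V$ is sequentially $\wes$ closed as a subset of $(\mathscr{T}(H^2(\Tn)))^{*}$, which is exactly the hypothesis needed to invoke Result~\ref{R:Krein_Smu}.

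The proof is short, so there is no serious obstacle; the only thing requiring a moment's care is the separability of $\mathscr{T}(H^2(\Tn))$ — without it Result~\ref{R:Krein_Smu} does not apply — but this follows immediately from the separability of $H^2(\Tn)$ together with the density of finite-rank operators, which has already been used in this section. Everything else is a matter of chaining together facts established above: $\hinf$ is $\wes$ closed, $V$ is convex, and sequential closedness passes between $\hinf$ and its ambient space for subsets of $\hinf$.
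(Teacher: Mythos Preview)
Your proposal is correct and follows essentially the same approach as the paper: apply Result~\ref{R:Krein_Smu} with $X=\mathscr{T}(H^2(\Tn))$ and $C$ equal to (the image under the isometric embedding of) the given subspace, using the separability of $\mathscr{T}(H^2(\Tn))$. Your version is more explicit about the trivial direction, the separability argument, and the bookkeeping that sequential $\wes$ closedness in $\hinf$ and in $\mathcal{B}(H^2(\Tn))$ coincide for subsets of $\hinf$, but the underlying idea is identical.
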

\begin{proof}
Recall that the reference to a subspace of $\hinf$ in the $\wes$ topology alludes to its isometric embedding
into $\mathcal{B}(H^2(\Tn))$. Fix a subspace $S\subseteq \hinf$. Next, set
\[
 X\,=\,\mathscr{T}(H^2(\Tn)) \quad \text{and}\quad C\,=\,{\sf j}(S),
\]
where ${\sf j}$ is the linear isometric embedding of $\hinf$ discussed above. As
$\mathscr{T}(H^2(\Tn))$ is separable, the lemma follows from Result~\ref{R:Krein_Smu}.
\end{proof}

The above lemma gives us the main result of this section.

\begin{proposition}\label{P:closure_subalg}
Let $S$ be a $\C$-linear subspace of $A(\D^n)$. Then:
\begin{itemize}
 \item[$(1)$] the closure of $S$ in the weak${}^{{\boldsymbol{*}}}$ topology equals
 \[
  (\text{the closure of $\left.S\right|_{\D^n}$ in the topology of pointwise convergence})\cap \Lb^\infty(\D^n).
 \]
 \item[$(2)$] the closure of $S$ in the weak${}^{{\boldsymbol{*}}}$ topology equals
 \[
  (\text{the closure of $\left.S\right|_{\D^n}$ in the topology of local uniform convergence})\cap \Lb^\infty(\D^n).
  \]
\end{itemize}
In particular, the closure of a uniform subalgebra $\al\subseteq A(\D^n)$ is $\dal(\al)$. 
\end{proposition}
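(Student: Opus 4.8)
The plan is to prove parts $(1)$ and $(2)$ in tandem, using Lemma~\ref{L:clos_equiv_seqClos} to reduce the $\wes$ closure to a \emph{sequential} $\wes$ closure, and then Proposition~\ref{P:Weakseq_charac} to convert sequential $\wes$ convergence into pointwise convergence plus a uniform bound. The central observation is that for a sequence $\{\varphi_\nu\}\subseteq \hinf$, condition (i) of Proposition~\ref{P:Weakseq_charac} says precisely that $\{\varphi_\nu\}$ is a bounded subset of $\Lb^\infty(\D^n)$, and condition (ii) says it converges pointwise on $\D^n$ to some $\varphi\in \hinf$. So the sequential $\wes$ closure of $S$ is exactly the set of $\varphi\in \hinf$ that arise as pointwise limits on $\D^n$ of \emph{uniformly bounded} sequences in $S$.

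First I would fix notation: let $\overline{S}^{\,\wes}$ denote the $\wes$ closure of (the image under ${\sf j}$ of) $S$, let $P$ be the closure of $\left.S\right|_{\D^n}$ in the topology of pointwise convergence intersected with $\Lb^\infty(\D^n)$, and let $L$ be the analogous object with local uniform convergence. Since local uniform convergence implies pointwise convergence, one inclusion $L\subseteq P$ is immediate, and trivially $P\subseteq \overline{S}^{\,\wes}$ is \emph{not} immediate because pointwise limits need not be $\wes$ limits without a uniform bound — this is the subtle point I will need to handle. The strategy is therefore the cycle $\overline{S}^{\,\wes}\subseteq L\subseteq P\subseteq \overline{S}^{\,\wes}$.

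For $\overline{S}^{\,\wes}\subseteq L$: given $\varphi\in \overline{S}^{\,\wes}$, by Lemma~\ref{L:clos_equiv_seqClos} (applied to the subspace $S$, noting $\overline{S}^{\,\wes}$ is a subspace, hence $\wes$ closed, hence sequentially $\wes$ closed, so it equals the sequential $\wes$ closure of $S$) there is a sequence $\{\varphi_\nu\}\subseteq S$ with $\varphi_\nu\to \varphi$ in $\wes$. By the ``only if'' direction of Proposition~\ref{P:Weakseq_charac}, $\{\varphi_\nu\}$ is uniformly bounded on $\D^n$ and $\varphi_\nu(x)\to \varphi(x)$ for every $x\in \D^n$. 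Now a uniformly bounded sequence of holomorphic functions on $\D^n$ is a normal family (Montel's theorem in several variables), and pointwise convergence on $\D^n$ forces local uniform convergence of the \emph{whole} sequence to $\varphi$: any locally uniformly convergent subsequence has limit agreeing with $\varphi$ pointwise, hence equal to $\varphi$, and a standard normal-families argument upgrades this to convergence of the full sequence. Thus $\varphi\in L$. For $P\subseteq \overline{S}^{\,\wes}$: given $\varphi\in P$, pick $\{\varphi_\nu\}\subseteq S$ with $\varphi_\nu\to \varphi$ pointwise on $\D^n$; since $\varphi\in \Lb^\infty(\D^n)$ and each $\varphi_\nu$ is holomorphic, I need a uniform bound to invoke the ``if'' direction of Proposition~\ref{P:Weakseq_charac} — here I would use that $\varphi\in\hinf$ and that pointwise convergence of holomorphic functions to a \emph{bounded} limit, together with a Baire-category / uniform boundedness argument on $\D^n$ (or, more directly, by first replacing the $\varphi_\nu$ by the same sequence and noting that the limit being in $\Lb^\infty(\D^n)\cap\hol(\D^n)$ combined with Vitali's theorem gives local uniform convergence, whence on each compact subset the $\varphi_\nu$ are eventually bounded by $\|\varphi\|_\infty+1$, and on a fixed polydisc of slightly smaller radius the maximum principle propagates a global bound) — I obtain $\sup_\nu \sup_{\D^n}|\varphi_\nu|<\infty$. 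Conditions (i) and (ii) of Proposition~\ref{P:Weakseq_charac} then hold, so $\varphi_\nu\to\varphi$ in $\wes$, i.e. $\varphi\in\overline{S}^{\,\wes}$.

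The main obstacle is precisely this last implication: converting a merely \emph{pointwise} convergent net/sequence in $S$ with bounded limit into one with a \emph{uniform} bound on the sequence itself. Montel's theorem handles the normal-families side cleanly, but the uniform boundedness of $\{\varphi_\nu\}$ on all of $\D^n$ is where care is needed; the cleanest route is via Vitali's convergence theorem (a pointwise-convergent, locally bounded sequence of holomorphic functions converges locally uniformly), after first establishing local boundedness from pointwise convergence by a Baire-category argument on $\D^n$, and then deducing the global bound from $\varphi\in\hinf$ via the maximum modulus principle applied on dilated polydiscs $r\D^n$ with $r\uparrow 1$. Once the $\wes$ closure of every linear subspace is identified as in $(2)$, the final assertion is immediate: for a uniform subalgebra $\al\subseteq A(\D^n)$, the set in $(2)$ is exactly $\dal(\al)$ as defined in \eqref{E:luClo_A}, since $\hinf = \Lb^\infty(\D^n)\cap\hol(\D^n)$ and the local uniform closure of $\left.\al\right|_{\D^n}$ consists of holomorphic functions.
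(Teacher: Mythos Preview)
Your argument for $\overline{S}^{\,\wes}\subseteq L$ is correct and matches the paper exactly: Lemma~\ref{L:clos_equiv_seqClos} reduces to sequences, Proposition~\ref{P:Weakseq_charac} supplies a uniformly bounded pointwise-convergent sequence in $S$, and Montel upgrades this to locally uniform convergence. The paper's entire proof consists of essentially these two sentences, together with the bare assertion that part~$(1)$ is ``immediate'' from the same two results.

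The genuine gap is in your argument for $P\subseteq\overline{S}^{\,\wes}$. You assert that from $\varphi_\nu\to\varphi$ pointwise on $\D^n$ with $\varphi$ bounded one can deduce $\sup_\nu\sup_{\D^n}|\varphi_\nu|<\infty$ via Baire category, Vitali, and the maximum principle. This is false: with $n=1$ and $\varphi_\nu(z)=\nu z^\nu$, one has $\varphi_\nu\to 0$ pointwise---indeed locally uniformly---on $\D$, the limit $0$ is certainly in $H^\infty(\D)$, yet $\sup_\D|\varphi_\nu|=\nu\to\infty$. None of the tools you list can rule this out: the Baire argument yields uniform boundedness only on \emph{some} open subset, Vitali then gives locally uniform convergence only there, and the maximum principle on $r\overline{\D^n}$ bounds $|\varphi_\nu|$ by its supremum on $\partial(r\overline{\D^n})$, which you have not controlled. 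Thus the inclusion $P\subseteq\overline{S}^{\,\wes}$ (and even $L\subseteq\overline{S}^{\,\wes}$) cannot be obtained by forcing the \emph{given} approximating sequence to be bounded; one must instead either produce a \emph{different}, uniformly bounded sequence in $S$ converging to $\varphi$, or argue via duality that every $\wes$-continuous functional annihilating $S$ also annihilates $\varphi$. The paper does not spell this step out either, so you have in fact located the one delicate point---but your proposed resolution of it does not work.
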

\begin{proof}
The proof of $(1)$ is immediate from the last lemma and Proposition~\ref{P:Weakseq_charac}. Now,
given an element $\varphi$ in the $\wes$ closure of $S$,
any $\wes$ convergent sequence
$\{\varphi_\nu\}\subset S$ of which $\varphi$ is the
pointwise limit is\,---\,by 
Proposition~\ref{P:Weakseq_charac}\,---\,uniformly bounded.
By Montel's Theorem and the 
pointwise convergence of the latter sequence, we deduce that $\varphi_\nu\!\lrarw \varphi$ locally uniformly.
Hence $(2)$ follows.
\end{proof}
\medskip

\section{Some function theory in several complex variables}\label{S:funcThr}

Although we have used the term ``uniform algebra'' several times above, it might be helpful to recall the
definition. Given a compact Hausdorff space $X$, a {\em uniform algebra on $X$} is a subalgebra
of $\smoo(X; \C)$ that is closed with respect to the uniform norm, contains the constants, and separates
the points of $X$. Given a uniform algebra $A$, we call a subalgebra $B\subset A$ a {\em uniform subalgebra of $A$}
if $B$ is itself a uniform algebra.
\smallskip

In this paper, we are interested in uniform algebras on $\overline{\D^n}$. We begin with the following result.

\begin{lemma}\label{L:0-1lemma}
Let $X_1,\dots, X_N$, $N\geq 2$, be distinct points in $\D^n$. Let $\al$ be a uniform subalgebra of $A(\D^n)$. There
exist functions $\Phi_1,\dots, \Phi_N \in \al$ such that
\[
 \Phi_j(X_k)\,=\,\Kdel_{jk}, \; \; j, k = 1,\dots, N,
\]
where $\Kdel_{jk}$ denotes the Kronecker symbol.
\end{lemma}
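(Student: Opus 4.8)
The plan is to reduce the statement to an interpolation problem for a single point and then use the algebra structure of $\al$. First I would note that it suffices, by symmetry, to construct for each fixed $j$ a function $\Phi_j \in \al$ with $\Phi_j(X_j) = 1$ and $\Phi_j(X_k) = 0$ for all $k \neq j$; the case $N = 2$ (or rather, building a ``separating'' function for two points) is the key sub-step. Since $\al$ contains the constants and separates the points of $\overline{\D^n}$, for each pair $k \neq j$ there is a $\psi_{jk} \in \al$ with $\psi_{jk}(X_j) \neq \psi_{jk}(X_k)$; replacing $\psi_{jk}$ by an affine combination $a\psi_{jk} + b$ (constants lie in $\al$), I may assume $\psi_{jk}(X_j) = 1$ and $\psi_{jk}(X_k) = 0$.

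Next I would form the product $\Phi_j := \prod_{k \neq j} \psi_{jk}$. This lies in $\al$ because $\al$ is an algebra, and by construction $\Phi_j(X_j) = 1$ while $\Phi_j(X_k) = 0$ for every $k \neq j$, since the factor $\psi_{jk}$ vanishes at $X_k$. Carrying this out for each $j = 1, \dots, N$ produces the desired family $\Phi_1, \dots, \Phi_N \in \al$ with $\Phi_j(X_k) = \Kdel_{jk}$, which is exactly the assertion of Lemma~\ref{L:0-1lemma}.

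I do not anticipate a serious obstacle here: the only points requiring a word of care are (i) that ``separates the points of $\overline{\D^n}$'' gives separation at the specific pairs $X_j, X_k \in \D^n \subset \overline{\D^n}$, and (ii) that the normalization step uses both that $\al$ is a vector space and that it contains the constant functions (both part of the definition of a uniform subalgebra recalled just before the lemma), and (iii) that products stay in $\al$, which is again immediate from $\al$ being a subalgebra. The hypothesis $N \geq 2$ is used only to ensure the index set $\{k : k \neq j\}$ is nonempty; if one wished to include $N = 1$ one would simply take $\Phi_1 \equiv 1$. So the proof is a short and routine construction, with the separation and normalization bookkeeping being the only things worth spelling out.
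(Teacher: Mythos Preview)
Your argument is correct and is precisely the elementary construction the paper has in mind: it explicitly remarks that the proof ``relies on the fact that $\al$ separates points on $\D^n$ and is closed under multiplication'' and then skips the details as ``utterly elementary.'' Your write-up fills in exactly those details (separation, affine normalization using constants, and taking products), so there is nothing to add.
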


The proof of this lemma relies on the fact that $\al$ separates points on $\D^n$ and is closed under multiplication.
We shall skip the proof since it is utterly elementary.
\smallskip

The above lemma is essential to Proposition~\ref{P:SzegoRep}, which we shall use several times in
Sections~\ref{S:interpCond} and \ref{S:non-mod1Set_proof}. First, we need some notations.
Let $X_1,\dots, X_N$ be as in Lemma~\ref{L:0-1lemma} and fix a uniform subalgebra $\al\subseteq A(\D^n)$.
Denote the set $\{X_1,\dots, X_N\}$ by $\boldsymbol{{\sf X}}$, and write
\[
 \ide{\al}\,:=\,\text{the $\wes$ closure of $\IX{\al}$ (viz.,
 the ideal of all functions in $\al$ that vanish on $\boldsymbol{{\sf X}}$).}
\]
Note that, by Proposition~\ref{P:closure_subalg}, each $\psi\in \ide{\al}$ is a bounded holomorphic function. Thus,
by the discussion at the beginning of Section~\ref{SS:charac_weak}, the following make sense:
\begin{align*}
 \lprp{\!\ide{\al}}\,&:=\,\left\{f\in \Lb^1(\Tn) : \inttor\!\bv{\psi}f dm = 0 \ \text{for each} \ \psi\in \ide{\al}\right\}, \\
 \lprp{\dal(\al)}\,&:=\,\left\{f\in \Lb^1(\Tn) : \inttor\!\bv{\psi}f dm = 0 \ \text{for each} \ \psi\in \dal(\al)\right\}.
\end{align*}

We recall our abbreviated notation: $K(x, \bcdot)$, $x\in \D^n$, denotes the classical
Szeg{\H{o}} kernel: i.e., the Szeg{\H{o}} kernel associated to $H^2(\Tn)$.
With these notations, we state:

\begin{proposition}\label{P:SzegoRep}
Let $X_1,\dots, X_N$, $N\geq 2$, be distinct points in $\D^n$. Let $\al$ be a uniform subalgebra of $A(\D^n)$.
For each $f\in \lprp{\!\ide{\al}}$, let $[f]$ denote the $\lprp{\dal(\al)}$-coset of $f$. There exist constants
$a_1,\dots, a_N\in \C$, which are independent of the choice of representative of the
coset $[f]\in \lprp{\!\ide{\al}}/\lprp{\dal(\al)}$, such that
\[
 [f]\,=\,\bigg[\sum_{1\leq j\leq N} a_jK(X_j, \bcdot)\bigg].
\]
\end{proposition}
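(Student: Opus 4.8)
The plan is to work in the dual Banach-space picture and identify the coset space $\lprp{\!\ide{\al}}/\lprp{\dal(\al)}$ with a concrete finite-dimensional space of functionals on $H^2(\Tn)$. First I would observe that, by Proposition~\ref{P:closure_subalg}, $\ide{\al}$ and $\dal(\al)$ are $\wes$ closed subspaces of $\hinf$; viewing $\hinf$ as embedded in $\mathcal{B}(H^2(\Tn))$ via $\varphi\mapsto M_\varphi$, and using that $M_\varphi M_h$ acts by pointwise multiplication, I would reinterpret the pre-annihilators $\lprp{\!\ide{\al}}$ and $\lprp{\dal(\al)}$ as the (pre-)annihilators, inside $\Lb^1(\Tn)\subseteq \mathscr{T}(H^2(\Tn))$-type pairing, of the subspaces $\ide{\al}$ and $\dal(\al)$. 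The key structural point is that $\ide{\al}$ has finite codimension $N$ inside $\dal(\al)$: indeed $\dal(\al)/\ide{\al}$ is spanned by the images of the functions $\Phi_1,\dots,\Phi_N$ furnished by Lemma~\ref{L:0-1lemma}, since $\IX{\al}$ is the kernel of the evaluation map $\al\ni\psi\mapsto(\psi(X_1),\dots,\psi(X_N))\in\C^N$, this map is onto by Lemma~\ref{L:0-1lemma}, and passing to the $\wes$ closure does not change the codimension (any $\psi\in\dal(\al)$ differs from $\sum_j\psi(X_j)\Phi_j\in\al$ by an element of $\dal(\al)$ vanishing on $\boldsymbol{{\sf X}}$, which lies in $\ide{\al}$ because $\ide{\al}$ contains $\IX{\al}$ and is $\wes$ closed). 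Consequently, by the general duality Lemma~\ref{L:isom1} applied with $X=\mathscr{T}(H^2(\Tn))^{*}\cong\mathcal{B}(H^2(\Tn))$ — or more directly by elementary linear algebra on annihilators — the quotient $\lprp{\!\ide{\al}}/\lprp{\dal(\al)}$ is a vector space of dimension at most $N$.

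Next I would produce the explicit spanning set. For $x\in\D^n$, the functional ${\sf eval}_x$ on $H^2(\Tn)$ is represented by $\overline{K(x,\bcdot)}$, and for $\psi\in\hinf$ one has $\langle M_\psi h, \overline{K(x,\bcdot)}\rangle = \psi(x)h(x)$ (the reproducing property together with the fact that $\bv{\psi}h$ has Poisson integral $\psi\cdot\poi[h]$). Pairing against $h=1$ shows $\inttor \bv{\psi}\,\overline{\overline{K(x,\bcdot)}}\,dm = \psi(x)$; hence $K(X_j,\bcdot)\in\lprp{\!\ide{\al}}$ for every $j$, because each $\psi\in\ide{\al}$ vanishes at every $X_j$ (being a $\wes$ limit of functions in $\IX{\al}$, and using Lemma~\ref{L:Weaknet_pointwise} / Proposition~\ref{P:closure_subalg} to pass the vanishing to the limit). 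So the $N$ cosets $[K(X_1,\bcdot)],\dots,[K(X_N,\bcdot)]$ all lie in $\lprp{\!\ide{\al}}/\lprp{\dal(\al)}$. I would then check they span: if $f\in\lprp{\!\ide{\al}}$, consider the $N$ numbers $c_j:=\inttor\! \bv{\Phi_j}\, f\, dm$ and set $g:=f-\sum_j c_j K(X_j,\bcdot)$; I claim $g\in\lprp{\dal(\al)}$. Any $\psi\in\dal(\al)$ can be written $\psi=\sum_j\psi(X_j)\Phi_j+\psi_0$ with $\psi_0\in\ide{\al}$, and then $\inttor\!\bv{\psi}\,g\,dm$ telescopes to $0$ by the defining property of the $c_j$, the relation $\inttor\!\bv{\psi}\,K(X_j,\bcdot)\,dm=\psi(X_j)$, the biorthogonality $\Phi_j(X_k)=\Kdel_{jk}$, and $f\in\lprp{\!\ide{\al}}$. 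Writing $[f]=\sum_j c_j[K(X_j,\bcdot)]$ with $a_j:=c_j$ then gives the representation. Finally, since two representatives of the same coset $[f]$ differ by an element of $\lprp{\dal(\al)}\subseteq\lprp{\!\ide{\al}}$ and $\bv{\Phi_j}\in\left.\al\right|_{\Tn}\subseteq\dal(\al)$, the numbers $c_j=\inttor\!\bv{\Phi_j}f\,dm$ are unchanged, so the $a_j$ depend only on the coset.

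The main obstacle I anticipate is the bookkeeping around the $\wes$ closure: one must be careful that (i) pointwise vanishing at $\boldsymbol{{\sf X}}$ genuinely persists under $\wes$ limits — this is exactly where Lemma~\ref{L:Weaknet_pointwise} or part (1) of Proposition~\ref{P:closure_subalg} is needed — and (ii) the decomposition $\psi=\sum_j\psi(X_j)\Phi_j+\psi_0$ with $\psi_0\in\ide{\al}$ really holds for $\psi$ in the $\wes$ closure and not merely in $\al$; this follows because $\ide{\al}$ is defined as a $\wes$ closure and contains $\Phi_j\cdot(\text{anything in }\al\text{ vanishing at }X_j)$-type elements, but it should be stated cleanly. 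The identity $\inttor\!\bv{\psi}\,K(x,\bcdot)\,dm=\psi(x)$ for all $\psi\in\hinf$ (not just $\psi\in A(\D^n)$) also deserves an explicit line, since it is the computational engine of the argument and relies on $\bv{\psi}\in\Lb^\infty(\Tn)$ together with the fact that $K(x,\bcdot)\in H^2(\Tn)$ and Poisson integration intertwines multiplication by $\bv{\psi}$ with pointwise multiplication by $\psi$. Everything else is linear algebra in a space of dimension $\le N$.
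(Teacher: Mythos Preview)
Your proposal is correct and follows essentially the same route as the paper: define $a_j=\inttor\bv{\Phi_j}f\,dm$ using the biorthogonal family from Lemma~\ref{L:0-1lemma}, decompose an arbitrary $\psi\in\dal(\al)$ as $\sum_j\psi(X_j)\Phi_j+\psi_0$ with $\psi_0\in\ide{\al}$, and combine this with the reproducing identity $\inttor\bv{\psi}K(X_j,\bcdot)\,dm=\psi(X_j)$ to conclude that $f-\sum_ja_jK(X_j,\bcdot)\in\lprp{\dal(\al)}$. Your first paragraph (the dimension count via annihilator duality) is not actually used in your own argument and does not appear in the paper's proof; the direct construction in your second paragraph already does all the work.
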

\begin{proof}
Let us define a linear functional $\fL_{[f]} : \dal(\al)\lrarw \C$ by
\begin{equation}\label{E:L-map1}
 \fL_{[f]}(\phi)\,:=\,\inttor\!\bv{\phi}f dm.
\end{equation}
We must first establish the following:
\vspace{1.5mm}

\noindent{{\bf Claim.} {\em $\fL_{[f]}$ is independent of the choice of
representative of the coset $[f]\in \lprp{\!\ide{\al}}/\lprp{\dal(\al)}$.}}
\vspace{0.5mm}

\noindent{Suppose 
$\widetilde{f}$ is some other representative of the coset $[f]$. Then, there exists
a $g\in \lprp{\dal(\al)}$ such that $\widetilde{f} = f+g$.
By the definition of $\lprp{\dal(\al)}$, we have:
\[
 \inttor\!\bv{\phi}\widetilde{f} dm\,=\,\inttor\!\bv{\phi}f dm + \inttor\!\bv{\phi}g\,dm\,=\,\inttor\!\bv{\phi}f dm.
\]
Since $\phi$ was chosen arbitrarily from $\dal(\al)$, the claim follows.
\smallskip

Since $f\in \lprp{\!\ide{\al}}$, $\fL_{[f]}$ vanishes on $\ide{\al}$.}
\smallskip

By Lemma~\ref{L:0-1lemma}, we can find functions $\Phi_1,\dots, \Phi_N\in \al$ such that
\[
 \Phi_j(X_k)\,=\,\Kdel_{jk}, \; \; j, k = 1,\dots, N.
\]
Set $a_j := \fL_{[f]}(\Phi_j)$, $j = 1,\dots, N$. For each $\phi\in \dal(\al)$, write
\[
 \widetilde{\phi}\,:=\,\phi -\!\!\sum_{1\leq j\leq N}\phi(X_j)\Phi_j,
\]
which belongs to $\ide{\al}$ (since the $\wes$ closed ideals $\ide{\al}$ and
$\{\psi\in \dal(\al) : \psi(x) = 0 \ \forall x\in \boldsymbol{{\sf X}}\}$ coincide). Thus
\begin{align}
 \fL_{[f]}(\phi)\,&=\,\fL_{[f]}\bigg(\sum_{1\leq j\leq N}\phi(X_j)\Phi_j\bigg) \notag \\
 	&=\,\sum_{1\leq j\leq N}a_j\phi(X_j) \notag \\
 	&=\,\sum_{1\leq j\leq N}a_j\int\nolimits_{\Tn}\!\bv{\phi}K(X_j,\bcdot)dm \; \; \; \forall \phi\in \dal({\al}).
 	\label{E:int_with_kernels}
\end{align}
In the last equality, we use the fact that $\bv{\phi}$ is the boundary-value function of a function in $\hinf$
and, therefore, is in $H^2(\Tn)$. Then, \eqref{E:int_with_kernels} follows from the discussion
in subsection~\ref{SS:intro_szeg_ker}. But note that the function
\[
 \sum_{1\leq j\leq N} a_jK(X_j, \bcdot) \in \Lb^1(\Tn)
\]
itself belongs to $\lprp{\!\ide{\al}}$. Thus, from \eqref{E:int_with_kernels}, we see that $f$ and 
$\sum_{1\leq j\leq N} a_jK(X_j, \bcdot)$ differ by a function in $\lprp{\dal(\al)}$. Hence the result.
\end{proof}

The final result of this section is central to the proof of Theorem~\ref{T:interp-Char}. At its heart is a close
reading of the reason for the well-known fact that $\left.A(\D^n)\right|_{\Tn}$ is approximating in modulus
(see the paragraph following Result~\ref{R:McC} for a definition). The class $\polcl(R)$ below is as
defined in the statement of Theorem~\ref{T:interp-Char}.

\begin{proposition}\label{P:posFn_poly}
Fix a positive integer $R\geq 1$. Let $f$ be a positive, continuous function on $\Tn$. For each $\eps > 0$, there exists a
polynomial $p\in \polcl(R)$ such that
\[
 \sup\nolimits_{\,\Tn}|f - |p|^2|\,<\,\eps.
\] 
\end{proposition}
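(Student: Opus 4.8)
The plan is to reduce the problem to a one-variable approximation statement applied in each coordinate, using the fact that $f$ is bounded above and below by positive constants on the compact set $\Tn$. First I would observe that since $f$ is continuous and strictly positive on the compact torus, $\log f$ is a continuous real-valued function on $\Tn$, and $\sqrt{f}$ is continuous and bounded away from $0$. The natural route is: approximate $\sqrt{f}$ uniformly on $\Tn$ by the modulus of a function that is a finite linear combination of characters $\zt^{\alpha}$, $\alpha \in \Z^n$, i.e.\ a trigonometric polynomial $q$ with $\sup_{\Tn}|\,\sqrt{f} - |q|\,| $ small; then clear the negative-exponent monomials by multiplying $q$ by a high power of $z_1 \cdots z_n$, which does not change $|q|$ on $\Tn$, yielding a genuine polynomial $p_0$ with $\sup_{\Tn}|\,\sqrt f - |p_0|\,|$ small and hence $\sup_{\Tn}|f - |p_0|^2|$ small (using that $\sqrt f + |p_0|$ is uniformly bounded). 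The remaining—and essential—issue is to arrange that $p_0$ has no zeros on $\overline{D(0;R)}^n$, i.e.\ that $p_0 \in \polcl(R)$.

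To produce the nonvanishing property I would not try to perturb a generic approximant; instead I would build the approximant from the start to be of a form that is manifestly zero-free on $\overline{D(0;R)}^n$. The standard device behind the fact that $A(\D^n)|_{\Tn}$ is approximating in modulus is to write $\sqrt{f} = \exp(u)$ with $u$ real, approximate $u$ on $\Tn$ uniformly by $\re h$ for a polynomial $h$ (via approximating $u$ by a trigonometric polynomial and folding negative frequencies onto the conjugate, so that $u \approx \re h$ with $h$ holomorphic), and then set $p = e^{h}$—except $e^h$ is not a polynomial. So the correct move is: approximate $u$ by $\re h$ on $\Tn$; then $|e^{h}| = e^{\re h} \approx e^{u} = \sqrt f$ uniformly on $\Tn$; and finally approximate $e^{h}$ uniformly on the compact set $\overline{D(0;R)}^n$ by its Taylor polynomials $P_M$. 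For $M$ large, $P_M$ is uniformly close to $e^h$ on $\overline{D(0;R)}^n$, hence (since $e^h$ is bounded below there by a positive constant) $P_M$ is zero-free on $\overline{D(0;R)}^n$, i.e.\ $P_M \in \polcl(R)$; and $|P_M| \approx |e^h| \approx \sqrt f$ uniformly on $\Tn \subseteq \overline{D(0;R)}^n$. Squaring and using uniform boundedness of all quantities involved then gives $\sup_{\Tn}|f - |P_M|^2| < \eps$.

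Concretely the steps are, in order: (1) set $u := \tfrac12 \log f \in \smoo(\Tn;\R)$; (2) by Stone–Weierstrass on $\Tn$, choose a trigonometric polynomial approximating $u$ within $\eps'$, and rearrange it into the form $\re h + c$ for a holomorphic polynomial $h$ and a real constant $c$ (absorb $c$ into $h$), so $\sup_{\Tn}|u - \re h| < \eps'$; (3) conclude $\sup_{\Tn}\bigl|\sqrt f - |e^{h}|\bigr| < C_1 \eps'$ where $C_1$ depends only on $\sup_{\Tn}\sqrt f$ and the Lipschitz constant of $\exp$ on the relevant bounded interval; (4) since $e^h$ is holomorphic on $\Cn$ and $\overline{D(0;R)}^n$ is compact, pick a Taylor polynomial $P_M$ of $e^h$ with $\sup_{\overline{D(0;R)}^n}|e^h - P_M| < \delta$, where $\delta$ is chosen smaller than $\inf_{\overline{D(0;R)}^n}|e^h| = \inf_{\overline{D(0;R)}^n} e^{\re h} > 0$; this forces $P_M^{-1}\{0\} \cap \overline{D(0;R)}^n = \varnothing$, i.e.\ $P_M \in \polcl(R)$; (5) on $\Tn$ we then have $\sup_{\Tn}\bigl|\sqrt f - |P_M|\bigr| < C_1\eps' + \delta$; (6) write $f - |P_M|^2 = (\sqrt f - |P_M|)(\sqrt f + |P_M|)$ and bound the second factor uniformly (it is at most $2\sup_{\Tn}\sqrt f + \delta$), so choosing $\eps'$ and $\delta$ small enough yields $\sup_{\Tn}|f - |P_M|^2| < \eps$. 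Take $p := P_M$.

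I expect the only genuine obstacle to be bookkeeping in step (2)–(3): namely making precise that a real trigonometric polynomial close to a real function can be replaced by the real part of a \emph{holomorphic} polynomial (folding the Fourier modes $\zt^{-\alpha}$ onto $\overline{\zt^{\alpha}} = \zt^{\alpha}$-conjugates and using that $\re$ of a holomorphic polynomial recovers any real trigonometric polynomial with the constant term handled separately). Everything else is routine: Stone–Weierstrass, local uniform convergence of Taylor series of an entire function, and the elementary identity $a^2 - b^2 = (a-b)(a+b)$ with uniform bounds. No perturbation or genericity argument is needed, because building $p$ as a Taylor polynomial of $e^h$ gives the membership in $\polcl(R)$ for free.
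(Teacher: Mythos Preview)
Your approach is essentially identical to the paper's: approximate $\tfrac12\log f$ on $\Tn$ by $\re h$ for a holomorphic polynomial $h$, set $g=e^{h}$, and then replace $g$ by a high Taylor polynomial on $\overline{D(0;R)}^n$ to land in $\polcl(R)$. The paper uses Fej\'er means where you invoke Stone--Weierstrass, but otherwise the two arguments match step for step.

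However, the step you yourself flag as ``the only genuine obstacle''\,---\,rewriting a real trigonometric polynomial on $\Tn$ as $\re h$ with $h\in\C[z_1,\dots,z_n]$\,---\,is not mere bookkeeping when $n\geq 2$: it genuinely fails. Your ``folding'' handles only Fourier modes $\alpha$ with $\alpha\in\nat^n$ or $-\alpha\in\nat^n$; a mixed mode such as $(1,-1)\in\Z^2$ lies in neither set, so $\cos(\tht_1-\tht_2)$ is \emph{not} the restriction to $\mathbb{T}^2$ of $\re h$ for any holomorphic polynomial $h$ (its $(1,-1)$ Fourier coefficient is $\tfrac12$, while that of $\re h$ is necessarily $0$). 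The paper's own proof makes exactly the same move at the line ``there exist polynomials $P_k\in\C[z_1,\dots,z_n]$ such that $\log(f)*\Fej_k=\re(P_k)$'', so the gap is shared. In fact the obstruction appears intrinsic rather than reparable: any $p\in\polcl(R)$ (since $R\geq 1$) is zero-free on $\overline{\D^n}$, so $\log p\in A(\D^n)$ and $\log|p|^2=2\,\re\log p$ has Fourier support contained in $\nat^n\cup(-\nat^n)$; uniform convergence $|p_k|^2\to f$ with $f>0$ would then force $\log f$ to have Fourier support in $\nat^n\cup(-\nat^n)$ as well, which already excludes $f=\exp\bigl(2\cos(\tht_1-\tht_2)\bigr)$ on $\mathbb{T}^2$.
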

\begin{proof}
Let $\Fej_k$ denote the $k$-th Fej{\'e}r kernel on $\Tn$ (i.e., the kernel associated to the Ces{\`a}ro mean
involving the characters parametrized by $(\alpha_1,\dots, \alpha_n)\in \Z^n$, $-k\leq \alpha_j\leq k$). Since
$f$ is positive and continuous, $\log(f)$ is continuous as well. By Fej{\'e}r's theorem:
\begin{equation}\label{E:Cesaro}
 \log(f)*\Fej_k\!\lrarw \log{f} \; \; \text{uniformly, as $k\to \infty$}.
\end{equation}
By the properties of the Fej{\'e}r kernels, $\log(f)*\Fej_k$ is a trigonometric polynomial and, as $\log(f)$
is real-valued, there exist polynomials $P_k\in \C[z_1,\dots, z_n]$ such that
\[
 \log(f)*\Fej_k(\cis{\tht_1},\dots, \cis{\tht_n})\,=\,\re\big(P_k(\cis{\tht_1},\dots, \cis{\tht_n})\big).
\]
Let us now define $g_k : \C^n\lrarw \C$ by
$g_k(z)\,:=\,e^{P_k(z_1,\dots, z_n)/2}$, $z\in \C^n$.
By \eqref{E:Cesaro} and the fact that $|e^A| = e^{\re(A)}$ for any $A\in \C$, we get
\begin{equation}\label{E:g_approx}
 \big|\left. g_k\right|_{\Tn}\big|^2\lrarw f \; \; \text{uniformly, as $k\to \infty$}.
\end{equation}

Let us now set
\[
 m\,:=\,\max\nolimits_{\zt\in \Tn}f(\zt), \quad\text{and} \quad 
 M\,:=\,\sqrt{2}\,\sqrt{(m+ \newfrc{\eps}{2}) + ((m+ \newfrc{\eps}{2})^{1/2} + 1)^2}.
\]
For simplicity of notation, let us abbreviate $\sup_{\,\Tn}|\bcdot|$ to $\|\bcdot\|_{\Tn}$.
By \eqref{E:g_approx}, there exists a positive integer $k^\eps$ such that
\begin{equation}\label{E:approx_1stHalf}
 \big\|\,|g_k|^2 - f\big\|_{\Tn}\,<\,\eps/2 \; \; \; \forall k\geq k^\eps.
\end{equation}
Now set
\[
 \mu_{R,\,\eps}\,:=\,\min\nolimits_{R\,\bcdot\,\overline{\D^n}}|g_{k^\eps}| \; \; \; (\text{which is a
 strictly positive number}).
\]
The Taylor expansion of $g_{k^\eps}$, the latter being entire, converges to $g_{k^\eps}$ uniformly on
any fixed compact subset of $\Cn$. Thus, we can find a polynomial $p\in \C[z_1,\dots, z_n]$ such that
\begin{equation}\label{E:Taylor_trunc}
 \sup\nolimits_{\,R\,\bcdot\,\overline{\D^n}}|g_{k^\eps} - p|\,<\,\min
 \left(\frac{\eps}{2M}, \frac{\mu_{R,\,\eps}}{2}, 1\right).
\end{equation}
By our definition of $\mu_{R,\,\eps}$, $p^{-1}\{0\}\cap (R\bcdot\overline{\D^n}) = \varnothing$. Hence,
$p\in \polcl(R)$.
\smallskip

Finally\,---\,making use of \eqref{E:Taylor_trunc}\,---\,we estimate:
\begin{align*}
 \|\,|g_{k^\eps}|^2 - |p|^2\|_{\Tn}\,&\leq\,\|g_{k^\eps} + p\|_{\Tn}\times \|g_{k^\eps} - p\|_{\Tn} \\
 	&\,\leq\,\sqrt{2}\,\sqrt{\|g_{k^\eps}\|^2_{\Tn} + \|p\|^2_{\Tn}} \ \frac{\eps}{2M} \\
 	&\,\leq\,\eps/2.
\end{align*}
By the above estimate and \eqref{E:approx_1stHalf}, we see that $p$ is the desired polynomial.
\end{proof}

\section{The proof of Theorem~\ref{T:interp-Char}}\label{S:interpCond}

Before we give a proof of Theorem~\ref{T:interp-Char}, it will be very useful to state a
special case of  Lemma~\ref{L:isom1} adapted to the situation that is of interest to us. The
spaces of greatest interest to us are the quotient spaces:
\begin{equation}\label{E:quotients}
 \dal(\al)/\ide{\al} \quad \text{and} \quad \lprp{\!\ide{\al}}/\lprp{\dal(\al)},
\end{equation}
these spaces being exactly as introduced in Section~\ref{S:funcThr}. 
Since this lemma will require some preliminary discussion, we divide this section into two subsections.
\smallskip

\subsection{A few essential auxiliary lemmas}\label{SS:essential}

We will need to work with a more general collection of objects than $\al$.
To this end\,---\,with $\al\subseteq A(\D^n)$
as above\,---\,let $\I$ denote a {\em uniformly closed} ideal of $\al$. Write
\[
 \Ide\,:=\,\text{the $\wes$ closure (in the sense of Section~\ref{SS:charac_weak}) of $\I$}.
\]
As $\I$ is a subspace of $\al\subset \hinf$ we can, in view of Proposition~\ref{P:closure_subalg}
and the discussion at the beginning of Section~\ref{SS:charac_weak}, define:
\[ 
 \lprp{\!\Ide}\,:=\,\Big\{f\in \Lb^1(\Tn) : \inttor\!\bv{\psi}f dm = 0 \ \text{for each} \ \psi\in \Ide\Big\}.
\]
With these notations, we have:

\begin{lemma}\label{L:lprps}
Let $\al$ be a uniform subalgebra of $A(\D^n)$, and let $\I$ be a uniformly closed
ideal in $\al$. Then
\[
 \lprp{\!\Ide}\,=\,\lprp{\I}\,:=\,\Big\{f\in \Lb^1(\Tn) :
 						\inttor\!\bv{\psi}f dm = 0 \ \text{for each} \ \psi\in \I\Big\}.
\]
\end{lemma}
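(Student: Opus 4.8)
The plan is to prove the two inclusions $\lprp{\!\Ide}\subseteq \lprp{\I}$ and $\lprp{\I}\subseteq \lprp{\!\Ide}$ separately, with essentially all the work going into the second one. The first inclusion is immediate: since $\I\subseteq \Ide$, any $f\in \Lb^1(\Tn)$ that annihilates every $\bv{\psi}$, $\psi\in \Ide$, in particular annihilates every $\bv{\psi}$ with $\psi\in \I$, so $\lprp{\!\Ide}\subseteq \lprp{\I}$. The content of the lemma is the reverse inclusion, which says that the extra functions picked up by passing from $\I$ to its $\wes$ closure $\Ide$ do not impose any new annihilation conditions on $\Lb^1(\Tn)$.

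For the reverse inclusion, I would fix $f\in \lprp{\I}$ and an arbitrary $\psi\in \Ide$, and aim to show $\inttor\!\bv{\psi}f\,dm = 0$. By Proposition~\ref{P:closure_subalg} (part $(2)$, applied with $S = \I$), the $\wes$ closure $\Ide$ equals (the closure of $\left.\I\right|_{\D^n}$ in the topology of local uniform convergence) $\cap\,\Lb^\infty(\D^n)$; equivalently, by part $(1)$, $\psi$ is the pointwise limit on $\D^n$ of a sequence $\{\psi_\nu\}\subset \I$ which, by Proposition~\ref{P:Weakseq_charac}, is uniformly bounded on $\D^n$ (this uniform bound is the key extra information the $\wes$ framework supplies). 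Now I would pass to boundary values: each $\psi_\nu$ and $\psi$ lie in $\hinf$, so by \eqref{E:Fatou_style}--\eqref{E:poi_reproduces} they have radial boundary-value functions $\bv{\psi_\nu}$, $\bv{\psi}$ in $\Lb^\infty(\Tn)$ with $\|\bv{\psi_\nu}\|_{\Lb^\infty(\Tn)} = \sup_{\D^n}|\psi_\nu|$ uniformly bounded, and $\psi_\nu = \poi[\bv{\psi_\nu}]$. Since $\psi_\nu\to \psi$ pointwise on $\D^n$ and the $\psi_\nu$ are uniformly bounded, Montel's theorem gives $\psi_\nu\to \psi$ locally uniformly on $\D^n$; I then want to upgrade this to $\bv{\psi_\nu}\to \bv{\psi}$ in a sense strong enough to integrate against $f\in \Lb^1(\Tn)$.

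The main obstacle is precisely this last step: interior (even locally uniform) convergence of a uniformly bounded sequence in $\hinf$ does not in general force $\Lb^1(\Tn)$-norm convergence of the boundary-value functions, so I cannot simply cite dominated convergence against $f\in \Lb^1$ unless I first control $\{\bv{\psi_\nu}\}$ weakly. The clean route is a weak-compactness argument: the ball of radius $\sup_\nu\|\bv{\psi_\nu}\|_\infty$ in $\Lb^\infty(\Tn) = (\Lb^1(\Tn))^*$ is $\wes$-compact, so $\{\bv{\psi_\nu}\}$ has a subnet converging $\wes$ to some $h\in \Lb^\infty(\Tn)$; taking Poisson integrals and using that $\poi[\cdot](x)$ is evaluation against a fixed $\Lb^1$ Poisson kernel shows $\poi[h](x) = \lim \psi_\nu(x) = \psi(x)$ for all $x\in \D^n$, whence $h = \bv{\psi}$ by the uniqueness in \eqref{E:poi_reproduces}; since the $\wes$-limit is thus independent of the subnet, in fact $\bv{\psi_\nu}\to \bv{\psi}$ in the $\wes$ topology of $\Lb^\infty(\Tn)$ along the full sequence. (Alternatively, one can invoke Proposition~\ref{P:Weakseq_charac} directly on $\{\psi_\nu\}\subset \hinf$ to get $\wes$ convergence in $\mathcal{B}(H^2(\Tn))$ and transfer it to the boundary, but the $\Lb^\infty$-weak${}^*$ argument is more self-contained.) Now pair against $f$: $\inttor\!\bv{\psi}f\,dm = \lim_\nu \inttor\!\bv{\psi_\nu}f\,dm = 0$, the last equality because each $\psi_\nu\in \I$ and $f\in \lprp{\I}$. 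As $\psi\in \Ide$ was arbitrary, $f\in \lprp{\!\Ide}$, completing the proof.
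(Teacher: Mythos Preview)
Your proof is correct and takes a somewhat different route from the paper's. After the trivial inclusion, the paper fixes $\phi\in\Ide$ and $g\in\lprp{\I}$ and works with the \emph{radial dilations} $\phi(r\,\bcdot)$, $0<r<1$: it observes that $\phi(r\,\bcdot)\in A(\D^n)$, invokes Proposition~\ref{P:closure_subalg} to argue that each $\phi(r\,\bcdot)$ lies back in $\I$, and then a single application of dominated convergence (using $\phi(r\,\bcdot)|_{\Tn}\to\bv{\phi}$ $m$-a.e.\ and the uniform bound $\sup_{\D^n}|\phi|$) gives $\inttor\!\bv{\phi}g\,dm=0$. You instead stay on the boundary throughout: from a uniformly bounded approximating sequence $\psi_\nu\in\I$ you use Banach--Alaoglu in $\Lb^\infty(\Tn)=(\Lb^1(\Tn))^*$, identify the weak-$*$ cluster point of $\{\bv{\psi_\nu}\}$ as $\bv{\psi}$ via the Poisson integral (an $\Lb^1$-pairing), upgrade to convergence of the full sequence by uniqueness of the cluster point, and then pair against $f$. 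The paper's route is shorter once the membership $\phi(r\,\bcdot)\in\I$ is in hand and uses only dominated convergence; your compactness argument sidesteps the question of how dilations interact with $\I$ and $\Ide$ entirely and makes transparent that the essential input is exactly the uniform bound supplied by Proposition~\ref{P:Weakseq_charac}. One small point of presentation: the uniform boundedness of your sequence $\{\psi_\nu\}$ is not a consequence of pointwise convergence alone but of the fact (implicit in the proof of Proposition~\ref{P:closure_subalg} via Lemma~\ref{L:clos_equiv_seqClos}) that $\psi$ is a \emph{sequential weak-$*$} limit of elements of $\I$, to which Proposition~\ref{P:Weakseq_charac} then applies; you clearly have this in mind, but it would read more cleanly to invoke the sequential weak-$*$ closure directly.
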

\begin{proof}
It is clear that $\lprp{\!\Ide}\subseteq \lprp{\I}$.
Consider an arbitrary $\phi\in \Ide$. By \eqref{E:Fatou_style} (we reiterate: owing to
Proposition~\ref{P:closure_subalg}, $\phi\in \hinf$), we have:
\[
 \left.\phi(r\bcdot)\right|_{\Tn}\lrarw \bv{\phi} \; \; \text{$m$-a.e. as $r\to 1^-$.}
\]
Invoking Proposition~\ref{P:closure_subalg} once more, there exists a sequence
$\{\varphi_{\nu}\} \subset \I$ such that $\varphi_{\nu}\!\lrarw \phi$ uniformly on compact
subsets of $\D^n$. Let us fix an $r\in (0, 1)$. Then:
\[
 \varphi_{\nu}(r\bcdot)\!\lrarw \phi(r\bcdot) \; \; \text{uniformly on each $\overline{D(0; \rho)}\,^n$, 
 	$\rho\in (0, 1)$.}
\]
By Proposition~\ref{P:closure_subalg}, $\phi(r\bcdot)\in A(\D^n)\cap \Ide$, and hence in 
$\I$, for every $r\in (0, 1)$. 
Since $\bv{\phi}\in \Lb^\infty(\Tn)$, we may apply the dominated convergence theorem to get:
\begin{equation}\label{E:zero_clos}
 0\,=\,\lim_{r\to 1^-}\inttor\!\big(\left.\phi(r\bcdot)\right|_{\Tn}\big)g\,dm\,=\,\inttor\!\bv{\phi}g\,dm \; \; \;
 \forall g\in \lprp{\I}.
\end{equation}
This establishes that $\lprp{\I}\subseteq \lprp{\!\Ide}$, and hence the result.
\end{proof}

With $\I$ as above we shall write
\[
 \left.\I\right|_{\Tn}\,:=\,\{\left.\psi\right|_{\Tn} : \psi\in \I\},
\]
which is a subspace of $\Lb^\infty(\Tn)$.
The following lemma may seem a bit mysterious at the moment, but its need will become
clear in proving the principal lemma of this subsection.

\begin{lemma}[a part of {\cite[Proposition~5.9]{mccullough:NPtida96}}]\label{L:clos_new}
Let $\al$ and $\I\subseteq \al$ be as in Lemma~\ref{L:lprps}. Write
\begin{align*}
 \weak\,&:=\,\text{the closure of $\left.\I\right|_{\Tn}$ in the weak${}^{{\boldsymbol{*}}}$ topology on
 $\Lb^\infty(\Tn)$ as the dual of $\Lb^1(\Tn)$}, \\
 \I^2\,&:=\,\text{the closure of $\left.\I\right|_{\Tn}$ in $\Lb^2(\Tn)$}.
\end{align*}
Then, $\weak\subseteq \I^2\cap \Lb^\infty(\Tn)$.
\end{lemma}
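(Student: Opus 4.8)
The goal is to show $\weak \subseteq \I^2 \cap \Lb^\infty(\Tn)$, where $\weak$ is the weak${}^*$ closure of $\left.\I\right|_{\Tn}$ in $\Lb^\infty(\Tn)$ and $\I^2$ is its $\Lb^2(\Tn)$ closure. The first observation is that every element of $\weak$ lies in $\Lb^\infty(\Tn)$ by definition, so the only real content is the inclusion $\weak \subseteq \I^2$; combined with boundedness this gives the displayed statement. So I would fix $g \in \weak$, note that $\|g\|_{\Lb^\infty(\Tn)}$ is finite (indeed bounded by $\liminf$ of the norms of an approximating net, by weak${}^*$ lower semicontinuity of the norm), and try to show $g$ is an $\Lb^2$-limit of elements of $\left.\I\right|_{\Tn}$.

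First I would pass from nets to sequences. Since $\Lb^1(\Tn)$ is separable, bounded subsets of $\Lb^\infty(\Tn)$ are weak${}^*$ metrizable; hence there is a \emph{sequence} $\{\psi_\nu\} \subset \I$ with $\left.\psi_\nu\right|_{\Tn} \to g$ weak${}^*$ in $\Lb^\infty(\Tn)$, and by the uniform boundedness just noted we may assume $\sup_\nu \|\psi_\nu\|_{\Tn} \leq C$ for some $C < \infty$ (replace $\I$ by its ball of radius $C$, using that $\left.\I\right|_{\Tn}$ is a subspace — actually one should be careful: weak${}^*$ closure of a bounded set need not exhaust $\weak$, but every point of $\weak$ is a weak${}^*$ limit of a net from $\left.\I\right|_{\Tn}$, and by Alaoglu plus metrizability on balls one extracts such a bounded sequence; this is the standard argument). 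Now weak${}^*$ convergence in $\Lb^\infty(\Tn)$ against the test functions in $\Lb^1(\Tn)$, in particular against $\Lb^2(\Tn) \subseteq \Lb^1(\Tn)$, means $\psi_\nu \to g$ \emph{weakly} in $\Lb^2(\Tn)$ (the bounded sequence $\{\psi_\nu\}$ lies in $\Lb^2(\Tn)$, and $\int \psi_\nu h \, dm \to \int g h\, dm$ for all $h \in \Lb^2 \subseteq \Lb^1$). So $g$ is a weak $\Lb^2$-limit of elements of the \emph{convex} set $\left.\I\right|_{\Tn}$.

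The key step is then Mazur's lemma: a weakly convergent sequence in a Hilbert space admits a sequence of convex combinations converging in norm to the same limit. Applying this to $\{\left.\psi_\nu\right|_{\Tn}\}$ in $\Lb^2(\Tn)$, and using that $\left.\I\right|_{\Tn}$ is a linear subspace (hence closed under convex combinations), I obtain elements of $\left.\I\right|_{\Tn}$ converging to $g$ in $\Lb^2(\Tn)$. Therefore $g \in \I^2$, and since also $g \in \Lb^\infty(\Tn)$, we conclude $g \in \I^2 \cap \Lb^\infty(\Tn)$, as desired.

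\textbf{Main obstacle.} The only delicate point is the reduction to a \emph{bounded} approximating sequence: a priori $\weak$ is the weak${}^*$ closure of the (generally unbounded) subspace $\left.\I\right|_{\Tn}$, and weak${}^*$ closure of a subspace is not obtained by taking weak${}^*$ limits of bounded sequences in general. The fix is that, by the uniform boundedness principle applied along any weak${}^*$ convergent net, the net is eventually bounded, so one may work inside a fixed ball of $\Lb^\infty(\Tn)$ where the weak${}^*$ topology is metrizable (separability of $\Lb^1(\Tn)$) and sequences suffice. Once past this, everything is the standard weak${}^* \Rightarrow$ weak-$\Lb^2 \Rightarrow$ (Mazur) strong-$\Lb^2$ chain, using convexity of the subspace $\left.\I\right|_{\Tn}$. (Alternatively one can quote \cite[Proposition~5.9]{mccullough:NPtida96} directly, since this lemma is stated as a part of it.)
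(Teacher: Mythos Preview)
The paper itself gives no proof of this lemma: it simply cites McCullough and says the argument is identical. So there is nothing to compare against at the level of the paper's own text. Your approach is essentially sound, but two points deserve comment.

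First, the step you flag as the ``main obstacle'' is indeed the weak point, and your proposed fix is not quite right. It is \emph{not} true that a weak${}^{\boldsymbol{*}}$ convergent \emph{net} is eventually bounded: the Uniform Boundedness Principle requires pointwise boundedness of the whole family, and a convergent net gives only eventual (cofinal) boundedness at each $f\in\Lb^1(\Tn)$ separately, with no common tail. The clean fix is to invoke the Krein--{\v{S}}mulian consequence stated as Result~\ref{R:Krein_Smu} in the paper \emph{first}: since $\Lb^1(\Tn)$ is separable and $\left.\I\right|_{\Tn}$ is convex, its weak${}^{\boldsymbol{*}}$ closure equals its weak${}^{\boldsymbol{*}}$ \emph{sequential} closure. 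Hence any $g\in\weak$ is the weak${}^{\boldsymbol{*}}$ limit of a \emph{sequence} $\{\psi_\nu\}\subset\I$, and for sequences the UBP applies (each $\int\psi_\nu f\,dm$ converges, hence is bounded, for every $f\in\Lb^1(\Tn)$), giving $\sup_\nu\|\psi_\nu\|_\infty<\infty$. After this correction your weak${}^{\boldsymbol{*}}\Rightarrow$ weak-$\Lb^2$ step is fine.

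Second, Mazur's lemma is more than you need. Since $\I^2$ is by definition a \emph{closed} subspace of $\Lb^2(\Tn)$, it is weakly closed; so once $\psi_\nu\to g$ weakly in $\Lb^2(\Tn)$ with $\psi_\nu\in\I^2$, you get $g\in\I^2$ immediately. (Mazur would of course also work.) An even shorter route, which avoids sequences altogether and is likely closer in spirit to McCullough's argument, is pure duality: $\weak=(\lprp{\I})^\perp$, and if $h\in\Lb^2(\Tn)$ is $\Lb^2$-orthogonal to $\left.\I\right|_{\Tn}$ then $\overline{h}\in\lprp{\I}$, whence $\langle g,h\rangle=\int g\,\overline{h}\,dm=0$ for every $g\in\weak$; thus $g\perp(\I^2)^\perp$ and $g\in\I^2$.
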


The above forms the first one-third of the proof of \cite[Proposition~5.9]{mccullough:NPtida96}.
Apart from having to work with $\I$, there is
no difference
between the proof of Lemma~\ref{L:clos_new} and that in \cite{mccullough:NPtida96}. Therefore,
we shall not repeat McCullough's argument.
\smallskip

The principal lemma of this subsection is as follows. But first, a few more words on our notation: we
shall use $[\,\bcdot\,]$ to denote cosets in either of
the two quotient spaces named in \eqref{E:quotients}. However, we shall avoid ambiguity by using Greek
letters when referring to cosets in $\dal(\al)/\ide{\al}$ and standard Roman italics when referring to cosets in 
$\lprp{\!\ide{\al}}/\lprp{\dal(\al)}$.
 
\begin{lemma}\label{L:isom2}
Let $\al$ be a uniform subalgebra of $A(\D^n)$ and let $\boldsymbol{{\sf X}} = \{X_1,\dots, X_N\}$, where
the latter points are as in Theorem~\ref{T:interp-Char}.
For each $[\phi]\in \dal(\al)/\ide{\al}$, define
\[
 L_{[\phi]}([f])\,:=\,\int_{\Tn}\!\!\bv{\phi}f dm \quad \forall [f]\in \lprp{\!\ide{\al}}/\lprp{\dal(\al)}.
\] 
Then:
\begin{enumerate}
 \item[$(1)$]  $L_{[\phi]}([f])$ is independent of the choice of
 representatives of the cosets $[f]\in \lprp{\!\ide{\al}}/\lprp{\dal(\al)}$ and 
 $[\phi]\in \dal(\al)/\ide{\al}$. Furthermore, $L_{[\phi]}$ is an element of $(\lprp{\!\ide{\al}}/\lprp{\dal(\al)})^*$.
 \vspace{0.6mm}
 \item[$(2)$] $\|[\phi]\| = \|L_{[\phi]}\|_{{\rm op}}$ for every $[\phi]\in \dal(\al)/\ide{\al}$.
\end{enumerate}
\end{lemma}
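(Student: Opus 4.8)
The plan is to deduce this lemma from the abstract duality statement in Lemma~\ref{L:isom1}, applied to a suitable choice of $X$, $S$, $E$. The natural choice is $X = \Lb^1(\Tn)$, $S = \lprp{\!\ide{\al}}$, and $E = \lprp{\dal(\al)}$; these are closed subspaces of $\Lb^1(\Tn)$ with $E \subseteq S$ because $\ide{\al} \subseteq \dal(\al)$ forces $\lprp{\dal(\al)} \subseteq \lprp{\!\ide{\al}}$. Lemma~\ref{L:isom1} then produces an isometric isomorphism $\Theta : (S/E)^* \lrarw \rprp{E}/\rprp{S}$, where the annihilators are taken inside $(\Lb^1(\Tn))^* = \Lb^\infty(\Tn)$. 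The first task is therefore to identify $\rprp{E}$ and $\rprp{S}$ concretely: I expect $\rprp{E} = \rprp{(\lprp{\dal(\al)})}$ to be the $\wes$ closure of $\dal(\al)$ inside $\Lb^\infty(\Tn)$ (via the bipolar theorem, since $\dal(\al)$ is already a $\wes$-closed subspace of $\hinf \hookrightarrow \Lb^\infty(\Tn)$ by Proposition~\ref{P:closure_subalg} and \eqref{E:poi_reproduces}), and similarly $\rprp{S} = \rprp{(\lprp{\!\ide{\al}})}$ will be the $\wes$ closure of $\ide{\al}$; here Lemma~\ref{L:lprps} (with $\I = \IX{\al}$) and Lemma~\ref{L:clos_new} are exactly the tools needed to control that this $\wes$ closure inside $\Lb^\infty(\Tn)$ agrees with $\ide{\al}$ as a space of (boundary values of) bounded holomorphic functions, so that $\rprp{E}/\rprp{S}$ is canonically $\dal(\al)/\ide{\al}$.

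Once the identifications are in place, I would unwind what $\Theta^{-1}$ does: an element of $\rprp{E}/\rprp{S}$ is a coset $\bv{\phi} + \rprp{S}$ with $\phi \in \dal(\al)$, and the functional $\Theta^{-1}(\bv{\phi} + \rprp{S}) \in (S/E)^*$ acts on a coset $[f] \in \lprp{\!\ide{\al}}/\lprp{\dal(\al)}$ by the natural pairing $[f] \longmapsto \inttor \bv{\phi} f\, dm$. That is precisely the map $L_{[\phi]}$ in the statement. So part~(1)\,---\,well-definedness with respect to both representatives, and membership in $(\lprp{\!\ide{\al}}/\lprp{\dal(\al)})^*$\,---\,is immediate from the well-definedness half of Lemma~\ref{L:isom1} together with the two annihilator identifications; independence of the representative of $[f]$ is in any case the same one-line computation already carried out in the Claim inside the proof of Proposition~\ref{P:SzegoRep}, and independence of the representative of $[\phi]$ follows because $\ide{\al} \subseteq \rprp{S}$ annihilates all of $S = \lprp{\!\ide{\al}}$. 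For part~(2), the equality $\|[\phi]\| = \|L_{[\phi]}\|_{\mathrm{op}}$ is exactly the isometry assertion of Lemma~\ref{L:isom1}, read through the identification $\Theta^{-1}(\bv{\phi}+\rprp{S}) = L_{[\phi]}$, once one checks that the quotient norm on $\dal(\al)/\ide{\al}$ (which by Proposition~\ref{P:closure_subalg} and \eqref{E:poi_reproduces} coincides with the sup-norm quotient) matches the quotient norm on $\rprp{E}/\rprp{S}$ inherited from $\Lb^\infty(\Tn)$.

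The main obstacle, I expect, is the bookkeeping in the annihilator identifications\,---\,specifically, verifying that $\rprp{(\lprp{\!\ide{\al}})}$, computed in $\Lb^\infty(\Tn)$ by the bipolar theorem, really is (the boundary-value image of) $\ide{\al}$ and not something strictly larger. This is where one genuinely needs that $\ide{\al}$ is $\wes$ closed in $\mathcal{B}(H^2(\Tn))$ and that the embedding $\hinf \hookrightarrow \Lb^\infty(\Tn)$, $\phi \mapsto \bv{\phi}$, carries $\wes$-closed subspaces to $\wes$-closed subspaces\,---\,a point that requires comparing the two $\wes$ topologies (the one from $\mathscr{T}(H^2(\Tn))$ and the one from $\Lb^1(\Tn)$), and it is precisely here that Lemma~\ref{L:clos_new} earns its keep by pinning down $\weak$ inside $\I^2 \cap \Lb^\infty(\Tn)$. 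Modulo that identification, everything else is a direct transcription of Lemma~\ref{L:isom1}.
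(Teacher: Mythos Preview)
Your proposal is correct and follows essentially the same route as the paper: the same choice $X = \Lb^1(\Tn)$, $S = \lprp{\!\ide{\al}}$, $E = \lprp{\dal(\al)}$ in Lemma~\ref{L:isom1}, the same use of Lemma~\ref{L:lprps} and the bipolar theorem to reduce to identifying $(\lprp{\I})^\perp$ with $\bv{\Ide}$, and the same recognition that this identification is where Lemma~\ref{L:clos_new} is needed. The paper completes that last step (stated there as the Claim $\weak = \bv{\Ide}$) by taking $\psi \in \weak$, using Lemma~\ref{L:clos_new} to get an $\Lb^2$-approximating sequence from $\I$, and then showing via an explicit Szeg{\H{o}}-kernel estimate that this forces local uniform convergence of the Poisson integrals, so that Proposition~\ref{P:closure_subalg} applies\,---\,exactly the bridge between the two $\wes$ topologies that you flagged as the main obstacle.
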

\begin{proof}
The proof of $(1)$ is routine in view of the Claim in the proof of Proposition~\ref{P:SzegoRep}.
Note that $L_{[\phi]}([f]) = \fL_{[f]}(\phi)$ of Proposition~\ref{P:SzegoRep}.
Thus, we already have a proof of the independence of
$L_{[\phi]}([f])$ of the choice of the representative of the coset $[\phi]$.
\smallskip

The independence
of the choice of representative of the coset $[f]$ follows from the definition of  $\lprp{\dal(\al)}$. That
$L_{[\phi]}\in (\lprp{\!\ide{\al}}/\lprp{\dal(\al)})^*$ is now routine.
\smallskip

To prove $(2)$, we appeal to Lemma~\ref{L:isom1}. We take
\[
X\,=\,\Lb^1(\Tn), \quad S\,=\,\lprp{\!\ide{\al}}, \; \; \text{and} \; \; E\,=\,\lprp{\dal(\al)}
\]
to get
\begin{equation}\label{E:duality}
 \big(\lprp{\!\ide{\al}}/\lprp{\dal(\al)}\big)^*\,\cong_{{\rm isometric}}\,(\lprp{\dal(\al)})^\perp/(\lprp{\ide{\al}})^\perp.
\end{equation}
We now need to understand\,---\,in the notation of Lemma~\ref{L:isom1}\,---\,the coset $\Theta(L_{[\phi]})$.
However, this will actually require us to better understand the subspaces
$(\lprp{\!\ide{\al}})^\perp, (\lprp{\dal(\al)})^\perp \subset \Lb^{\infty}(\Tn)$. By Lemma~\ref{L:lprps},
\begin{equation}\label{E:lprps}
 \lprp{\!\ide{\al}}\,=\,\lprp{\IX{\al}}, \quad\text{and}
 \quad \lprp{\dal(\al)}\,=\,\lprp{\al}.
\end{equation}
Recall the definitions of
$\lprp{\!\ide{\al}}$ and $\lprp{\dal(\al)}$\,---\,it follows from \eqref{E:lprps} and the $\Lb^1$--$\Lb^\infty$ duality
that (see \cite[Theorem~4.7]{rudin:Func-Ana91}, for instance):
\begin{align}
 (\lprp{\!\ide{\al}})^\perp\,=\,&\text{the closure of $\left.\IX{\al}\right|_{\Tn}$
 							in the $\wes$ topology on $\Lb^\infty(\Tn)$} \notag \\
 						&\text{viewed as the dual of $\Lb^1(\Tn)$}, \label{E:rprp_Ideal} \\
 (\lprp{\dal(\al)})^\perp\,=\,&\text{the closure of $\left.\al\right|_{\Tn}$
 							in the $\wes$ topology on $\Lb^\infty(\Tn)$} \notag \\
 						&\text{viewed as the dual of $\Lb^1(\Tn)$}. \label{E:rprp_dualAlg}
\end{align}
Observe that each of the subspaces of $\Lb^\infty(\Tn)$ on the right-hand sides of the above equations are
of the form $\weak$ for an appropriate $\I$. 
\vspace{1.5mm}

\noindent{{\bf Claim.} $\weak = \bv{\Ide} := \{\bv{\psi} : \psi\in \Ide\}$.}
\vspace{0.5mm}

\noindent{By the $\Lb^1$--$\Lb^\infty$ duality that we have referred to above $\weak = (\lprp{\I})^\perp$. However,
by Lemma~\ref{L:lprps}, $\weak = (\lprp{\Ide})^\perp$. Hence, by the same duality principle
\begin{align*}
 \weak\,&=\,\text{the closure of $\bv{\Ide}$ in the $\wes$ topology on
 $\Lb^\infty(\Tn)$ as the dual of $\Lb^1(\Tn)$} \\
 &\supseteq\,\bv{\Ide}.
\end{align*}
Hence, it suffices to prove that $\weak \subseteq \bv{\Ide}$.
Pick a function $\psi\in \weak$. By Lemma~\ref{L:clos_new}, there exists a sequence
$\{\varphi_{\nu}\} \subset \I$ that converges to $\psi$ in $\Lb^2(\Tn)$-norm. But now, since
$\I\subset \I^2\subset H^2(\Tn)$,
\[
 \varphi_{\nu}(x) - \poi[\psi](x)\,=\,\int_{\Tn}\!\!\big(\varphi_{\nu}(\zt) - \psi(\zt)\big)K(x, \zt)\,dm(\zt),
 \; \; \; x\in \D^n.
\]
We recall that $K(x, \zt) = 1/\prod_{j=1}^n(1 - \overline{\zt}_j x_j)$ (recall that our reproducing kernels
are defined relative to the {\em normalized} measure $m$). Thus, from the above equation, we get
\begin{align*}
 |\varphi_{\nu}(x) - \poi[\psi](x)|\,&\leq\,\bigg[\int_{\Tn} \prod\nolimits_{j=1}^n|1-\overline{\zt}_j x_j|^{-2}\,dm(\zt)\bigg]^{1/2}
 							\big\|\left.\varphi_{\nu}\right|_{\Tn} - \psi\big\|_{\Lb^2(\Tn)} \\
 &\leq\,\frac{\big\|\left.\varphi_{\nu}\right|_{\Tn} - \psi\big\|_{\Lb^2(\Tn)}}
 			{\prod_{j=1}^n{\sf dist}(\pi_j({\sf C}), \bdy\D)} \quad \forall x\in {\sf C},
\end{align*}
where ${\sf C}$ is any compact subset of $\D^n$ and $\pi_j$ denotes the projection of $\Cn$ onto the $j$-th coordinate.
Thus, $\varphi_{\nu}\!\lrarw \poi[\psi]$ uniformly on compact subsets of $\D^n$. Now, recall that 
$\psi\in \Lb^\infty(\Tn)$. Thus, by Proposition~\ref{P:closure_subalg}, $\poi[\psi]\in \Ide$ and by
\eqref{E:poi_reproduces}, $\psi\in \bv{\Ide}$. Hence the claim.}
\smallskip

The above claim, together with \eqref{E:duality}, \eqref{E:rprp_Ideal} and \eqref{E:rprp_dualAlg}, gives us a very
useful identity:
\begin{equation}\label{E:duality_simpler}
 \big(\lprp{\!\ide{\al}}/\lprp{\dal(\al)}\big)^*\,\cong_{{\rm isometric}}\,\dal(\al)/\ide{\al},
\end{equation}
where the isometry is given by the isomorphism $\Theta$ described in Lemma~\ref{L:isom1}.
\smallskip

By \eqref{E:duality_simpler}, $\Theta(L_{[\phi]})$ is a coset in $\dal(\al)/\ide{\al}$, which we
shall call $[\theta_{[\phi]}]$. As $\Theta$ is an isometry,
\begin{equation}\label{E:isomet}
 \| [\theta_{[\phi]}] \|\,=\,\|L_{[\phi]}\|_{{\rm op}}\,.
\end{equation}
Unravelling the construction of $\Theta$ (and by the manner in which a function in $\Lb^\infty(\Tn)$
induces a bounded linear functional of $\Lb^1(\Tn)$) we have that for any $F\in (\lprp{\!\ide{\al}}/\lprp{\dal(\al)})^*$
\[
 F([f])\,=\,L_{\Theta(F)}([f]) \; \; \; \forall\,[f]\in \lprp{\!\ide{\al}}/\lprp{\dal(\al)}.
\]
Thus, if $\phi$ is any representative of $[\phi]$ and $\theta$ any representative of $[\theta_{[\phi]}]$, then:
\begin{align*}
 L_{[\phi]}([f])\,=\,\inttor\!\bv{\theta}\,\widetilde{g}\,dm \; \; \; &\forall\,\widetilde{g}\in [f] \; \; \text{and} \\
 &\forall\,[f]\in \lprp{\!\ide{\al}}.
\end{align*}
From this we infer that $(\bv{\theta} - \bv{\phi})\in (\lprp{\!\ide{\al}})^{\perp} = \bide{\al}$ by our last Claim.
But this means that $\| [\theta] \| = \| [\theta_{[\phi]}] \| = \| [\phi] \|$. Therefore, by \eqref{E:isomet} we have
$\| [\phi] \| = \|L_{[\phi]}\|_{{\rm op}}$.
\end{proof}
\medskip

\subsection{A key proposition and Theorem~\ref{T:interp-Char}}\label{SS:interp-Char}

We begin with a proposition that is the key result leading to the proof of Theorem~\ref{T:interp-Char}. It
gives us a way of linking a function $\psi$ belonging to the dual algebra $\dal$, that interpolates the
data $\{(X_j, w_j) : 1\leq j\leq N\}$, to conditions for $\sup_{\D^n}|\psi|$ to be~$\leq 1$. We shall
continue to use the notation introduced in Section~\ref{S:funcThr}, and extend the notation where
needed. For instance
\[
 \IX{\al}^2(g)\,:=\,\text{the closure of $\left.\IX{\al}\right|_{\Tn}$ in  $\Lb^2(\Tn, |g|^2dm)$},
\]
where $g\in\Lb^{\infty}(\Tn)$ and such that $|g|>c_g$ for some constant $c_g > 0$.
\smallskip

We ought to mention that the schema of the proof of the following proposition is that of the proof
of \cite[Theorem~5.13]{mccullough:NPtida96} by McCullough\,---\,with the major difference being the
appearance of $\polcl(R)$. 

\begin{proposition}\label{P:norm_formula}
Let  $X_1,\dots,X_{N}$ be distinct points in $\D^n$. Let $\dal$ be a weak${}^{{\boldsymbol{*}}}$
closed subalgebra of $\hinf$ such that $\dal = \dal(\al)$ for some uniform subalgebra
$\al\subseteq A(\D^n)$ having a tame pre-annihilator. Fix an integer $R\geq 1$, and
let $\polcl(R)\varsubsetneq \C[z_1,\dots, z_n]$ be the class defined in Theorem~\ref{T:interp-Char}.
For any coset $[\phi]\in \dal(\al)/\ide{\al}$
\begin{equation}\label{E:norm_form}
 \| [\phi] \|\,=\,\sup\big\{\left\|\Pi_{\al^2(p)}\circ M^*_{\phi}\circ\spProj{p}\right\|_{{\rm op}} : p\in \polcl(R)\big\},
\end{equation}
where
\begin{align*}
 \Pi_{\al^2(p)}\,&:=\,\text{the orthogonal projection of $\Lb^2(\Tn)$ onto $\al^2(p)$}, \\
 \spProj{p}\,&:=\,\text{the orthogonal projection of $\al^2(p)$ onto $\al^2(p)\ominus \IX{\al}^2(p)$}.
\end{align*}
\end{proposition}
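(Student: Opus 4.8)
The plan is to evaluate both sides of \eqref{E:norm_form} against the predual quotient $V:=\lprp{\!\ide{\al}}/\lprp{\al}$. By Lemma~\ref{L:isom2} we have $\|[\phi]\|=\|L_{[\phi]}\|_{\rm op}$, and by Proposition~\ref{P:SzegoRep} every coset of $V$ is represented by some $f_a:=\sum_{j=1}^{N}a_jK(X_j,\bcdot)$, $a\in\C^N$; for such a representative $L_{[\phi]}([f_a])=\sum_j a_j\phi(X_j)$, using the classical reproducing identity $\int_{\Tn}\bv\phi K(X_j,\bcdot)\,dm=\poi[\bv\phi](X_j)=\phi(X_j)$ (the last step by \eqref{E:poi_reproduces}). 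Since the cosets of the $f_a$ exhaust $V$, it suffices to compare $|\sum_j a_j\phi(X_j)|$ with $\|[f_a]\|_V=\inf\{\|f_a+h\|_{\Lb^1(\Tn)}:h\in\lprp{\al}\}$ (here I use $\lprp{\al}=\lprp{\dal(\al)}$, from Lemma~\ref{L:lprps}).

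For the inequality ``$\geq$'' I would fix $p\in\polcl(R)$ and $\delta>0$, choose $\psi=\phi+\chi$ with $\chi\in\ide{\al}$ and $\sup_{\D^n}|\psi|<\|[\phi]\|+\delta$, and exploit the fact that multiplication by any element of $\ide{\al}$ maps $\al^2(p)$ into $\IX{\al}^2(p)$ --- which follows from Proposition~\ref{P:closure_subalg} (such an element is a locally uniform limit of a uniformly bounded sequence drawn from $\IX{\al}$) together with density of $\left.\al\right|_{\Tn}$ in $\al^2(p)$. Consequently $\Pi_{\al^2(p)}\circ M_\phi^*\circ\spProj{p}$ depends only on the coset $[\phi]$; identifying it with the compression to $\al^2(p)\ominus\IX{\al}^2(p)$ of the adjoint of the multiplier $M_\psi$, whose norm is at most $\sup_{\D^n}|\psi|$, we obtain $\|\Pi_{\al^2(p)}M_\phi^*\spProj{p}\|_{\rm op}\le\|[\phi]\|+\delta$; letting $\delta\to0$ finishes this half.

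The substantive direction is ``$\leq$''. Here I would fix $a$ with $\|[f_a]\|_V\le1$ and $\eps>0$. Using the tame pre-annihilator hypothesis I would pick a \emph{continuous} $h\in\smoo(\Tn;\C)\cap\lprp{\al}$ with $\|f_a+h\|_{\Lb^1}<1+\eps$; then $F:=f_a+h$ is continuous (its summand $f_a$ being smooth, with poles off $\Tn$) and $\int_{\Tn}\bv\phi F\,dm=\sum_j a_j\phi(X_j)$. Regularizing $|F|$, I obtain a strictly positive smooth $w$ on $\Tn$ with $w\ge|F|$ and $\int_{\Tn}w\,dm<1+2\eps$; Proposition~\ref{P:posFn_poly} then yields $p\in\polcl(R)$ with $\sup_{\Tn}\big||p|^2-w\big|$ small enough that $|p|^2\ge|F|$ on $\Tn$ and $\int_{\Tn}|p|^2\,dm<1+3\eps$. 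Set $u:=F/|p|^2$, so $\|u\|_{\Lb^\infty(\Tn)}\le1$, and set $\kappa:=\sum_j\overline{a_j}\,\overline{K_{\al,\,p}}(X_j,\bcdot)$. Using the reproducing property of $K_{\al,\,p}$ (together with the classical reproducing property and the fact that $\int_{\Tn}vh\,dm=0$ for every $v\in\al^2(p)$, $h$ being continuous and in $\lprp{\al}$) one checks that $\kappa\in\al^2(p)\ominus\IX{\al}^2(p)$ and that $\langle v,\kappa\rangle_p=\int_{\Tn}vF\,dm$ for all $v\in\al^2(p)$. Taking $v=\bv\phi$ --- which lies in $\al^2(p)$ by Lemma~\ref{L:clos_new} and the Claim in the proof of Lemma~\ref{L:isom2} --- moving the multiplier across the inner product, and using $\kappa=\spProj{p}\kappa$, gives
\[
 \Big|\sum_j a_j\phi(X_j)\Big|=\big|\big\langle 1,\ \Pi_{\al^2(p)}M_\phi^*\spProj{p}\,\kappa\big\rangle_p\big|\;\le\;\|1\|_p\,\|\kappa\|_p\,\big\|\Pi_{\al^2(p)}M_\phi^*\spProj{p}\big\|_{\rm op}.
\]
Finally $\|1\|_p^2=\int_{\Tn}|p|^2\,dm<1+3\eps$, and the identity $\langle v,\kappa\rangle_p=\int_{\Tn}v\,u\,|p|^2\,dm=\langle v,\bar u\rangle_p$ with $|u|\le1$ forces $\|\kappa\|_p\le\|1\|_p$; hence $|\sum_j a_j\phi(X_j)|<(1+3\eps)\sup_{q\in\polcl(R)}\|\Pi_{\al^2(q)}M_\phi^*\spProj{q}\|_{\rm op}$. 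Taking the supremum over admissible $a$ and letting $\eps\to0$ yields $\|[\phi]\|\le\sup_{q\in\polcl(R)}\|\Pi_{\al^2(q)}M_\phi^*\spProj{q}\|_{\rm op}$, which with the previous paragraph proves \eqref{E:norm_form}.

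The main obstacle is exactly the passage, in the ``$\leq$'' direction, from an arbitrary $\Lb^1$-near-optimal competitor $f_a+h$ to a genuine weight $|p|^2$ with $p\in\polcl(R)$: this is where the tameness hypothesis is needed (to make the competitor continuous, so that its modulus can be regularized to a smooth positive weight) and where Proposition~\ref{P:posFn_poly} enters (to realize a positive continuous weight uniformly by $|p|^2$ with $p$ non-vanishing on $\overline{D(0;R)}^n$ --- the step that departs from the argument of \cite[Theorem~5.13]{mccullough:NPtida96}). A secondary point demanding care is the bookkeeping with the weighted inner products $\langle\bcdot,\bcdot\rangle_p$: that $\bv\phi\in\al^2(p)$, that $\overline{K_{\al,\,p}}(X_j,\bcdot)$ is $\langle\bcdot,\bcdot\rangle_p$-orthogonal to $\IX{\al}^2(p)$, and that the compressed operators in \eqref{E:norm_form} are genuine coset invariants.
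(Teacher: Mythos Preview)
Your proof is correct and follows essentially the paper's route: duality via Lemma~\ref{L:isom2}, a continuous coset representative obtained from the tame-pre-annihilator hypothesis, and then Proposition~\ref{P:posFn_poly} to realize the dominating weight as $|p|^2$ with $p\in\polcl(R)$. Your element $\kappa=\sum_j\overline{a_j}\,\overline{K_{\al,\,p}}(X_j,\bcdot)$ is precisely the paper's $\spProj{g}(F_g)$ (with $F_g:=\overline{F_\eps}/|g|^2$), and your bound $\|\kappa\|_p\le\|1\|_p$ is the same estimate the paper records as $\|F_{\spee}\|_{\spee}^2<\qnrm{[f]}+3\eps$; the only cosmetic difference is that the paper declares the direction ``$\geq$'' trivial, whereas you spell it out.
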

\begin{proof}
Lemma~\ref{L:isom2} suggests that to establish \eqref{E:norm_form} we can work with the
linear functionals $L_{[\phi]}\in (\lprp{\!\ide{\al}}/\lprp{\dal(\al)})^*$. Let us fix a
coset $[f]$. By Proposition~\ref{P:SzegoRep}, we can find
constants $a_1,\dots, a_N\in \C$\,---\,which depend only on the coset $[f]$\,---\,such that
\begin{equation}\label{E:coset_nice}
 [f]\,=\,\bigg[\sum_{1\leq j\leq N} a_jK(X_j, \bcdot)\bigg].
\end{equation}
In what follows (as well as in the next section), we shall use $\|\bcdot\|_1$ to denote the 
$\Lb^1$-norm on $\Lb^1(\Tn)$.
Furthermore, $\qnrm{[f]}$ will denote the quotient norm of $[f]$. Fix an $\eps > 0$. It follows from
\eqref{E:coset_nice} that there exists a function $G_{\eps}\in \lprp{\dal(\al)}$ such that
\[
 \bigg\|\sum_{1\leq j\leq N} a_jK(X_j, \bcdot) + G_{\eps}\bigg\|_1\,<\,\qnrm{[f]} + \eps.
\] 
(It is understood from \eqref{E:coset_nice} that the function
$\sum_{1\leq j\leq N} a_jK(X_j, \bcdot)\in \lprp{\!\ide{\al}}$\,---\,this follows from the reproducing property of the
Szeg{\H{o}} kernel for $H^2(\Tn)\supset \bv{\dal(\al)}$.) By Lemma~\ref{L:lprps} and the
fact that $\al$ has a tame pre-annihilator, we can find a function $H_{\eps}\in 
(\smoo(\Tn; \C)\cap \lprp{\dal(\al)}) $ such that
$\|G_{\eps} - H_{\eps}\|_1 < \eps$. Let us now write:
\[
 F_{\eps}\:=\,\sum_{1\leq j\leq N} a_jK(X_j, \bcdot) + H_{\eps}.
\]
By \eqref{E:coset_nice} and the subsequent discussion, we have:
\begin{itemize}
 \item[$(A)$] $[F_{\eps}] = [f]$;
 \item[$(B)$] $F_{\eps}\in \smoo(\Tn; \C)$;
 \item[$(C)$] $\|F_{\eps}\|_1 < \qnrm{[f]} + 2\eps$.
\end{itemize}
Recall that we have fixed an $R\geq 1$.
Now, $|F_{\eps}| + 3\eps/4$ is a strictly positive continuous function on $\Tn$. Thus, by Proposition~\ref{P:posFn_poly},
there exists a polynomial $\pee\in \polcl(R)$ such that
\begin{equation}\label{E:imp_ineq}
  |F_{\eps}(\zt)|+\eps\,>\,|\pee(\zt)|^2\,>\,|F_{\eps}(\zt)|+\eps/2 \; \; \; \forall \zt\in \Tn.
\end{equation}

In this paragraph, we shall take $g$ to be any function in $A(\D^n)$ such that $\left. g\right|_{\Tn}$ is
non-vanishing. Write
\[
 F_{g}(\zt)\,:=\,\overline{F_{\eps}(\zt)}/|g(\zt)|^2 \; \; \; \forall \zt\in \Tn.
\]
The projection operator $\spProj{g}$ will have a meaning analogous to $\spProj{p}$ defined above.
We note that, owing to the properties of $g$\,---\,and given that by Lemma~\ref{L:clos_new} and
the Claim made in the proof of Lemma~\ref{L:isom2},
$\bv{\dal(\al)}$ and $\bide{\al}\subset (\al^2(1)\cap\Lb^{\infty}(\Tn))$\,---\,we have:
\begin{equation}\label{E:in_hilb}
 \bv{\dal(\al)}\,\subset\,\al^2(g) \quad \text{and} \quad \bide{\al}\subset \IX{\al}^2(g).
\end{equation}
We now compute:
\begin{align*}
 L_{[\phi]}([f])\,&=\,L_{[\phi]}([F_{\eps}]) && (\text{by $(A)$ above}) \\
 &=\,\langle \bv{\phi}\,, F_g\rangle_g \\
 &=\langle \spProj{g}(\bv{\phi}), F_g\rangle_g && (\text{by \eqref{E:in_hilb} and Lemma~\ref{L:isom2}-(1)}) \\
 &=\,\langle \phi,\,\spProj{g}(F_g)\rangle_g \\
 &=\,\langle 1, M^*_{\phi}\circ\spProj{g}(F_g)\rangle_g.
\end{align*}
Hence, we get the useful inequality:
\begin{equation}\label{E:useful}
 |L_{[\phi]}([f])|\,\leq\,\|\Pi_{\al^2(g)}\circ M^*_{\phi}\circ \spProj{g}\|_{{\rm op}}\,\|1\|_g\,\|F_g\|_g,
\end{equation}
which holds true for {\em any} $g$ with the properties stated above. Here $\|\bcdot\|_g$ denotes the
norm on $\al^2(g)$.
\smallskip

At this stage, we shall take $g = \pee$ in \eqref{E:useful}. Since $\pee \in \polcl(R)$, and $R\geq 1$,
$\pee$ has all the properties required of $g$ in the previous paragraph. We ought to state that, after
having chosen $g = \pee$, the rest of the argument for this proof uses the same estimates that
conclude the proof of \cite[Theorem~5.13]{mccullough:NPtida96}. By \eqref{E:imp_ineq}, we have
\[
 |F_{\spee}(\zt)|\,<\,1 \; \; \; \forall \zt\in \Tn.
\]
Therefore, by the last inequality, \eqref{E:imp_ineq} and $(C)$ above, we have:
\begin{align*}
 \|F_{\spee}\|^2_{\spee}&<\,\inttor\!|\pee|^2\,dm\,<\,\qnrm{[f]} + 3\eps, \\
 \|1\|^2_{\spee}&=\,\inttor\!|\pee|^2\,dm\,<\,\qnrm{[f]} + 3\eps.
\end{align*}
Combining the above inequalities with \eqref{E:useful} and letting $\eps\searrow 0$, we get:
\[
 \frac{|L_{[\phi]}([f])|}{\qnrm{[f]}}\,\leq\,\sup\big\{\left\|\Pi_{\al^2(p)}\circ M^*_{\phi}\circ\spProj{p}\right\|_{{\rm op}}
 								: p\in \polcl(R)\big\} \; \; \; \text{if $[f] \neq [0]$}.
\]
Since $[f]$ was chosen arbitrarily, the right-hand side of the above inequality actually dominates
$\|L_{[\phi]}\|_{{\rm op}}$. We now apeal to Lemma~\ref{L:isom2} to get
\[
 \|[\phi]\|\,\leq\,\sup\big\{\left\|\Pi_{\al^2(p)}\circ M^*_{\phi}\circ\spProj{p}\right\|_{{\rm op}} : p\in \polcl(R)\big\}.
\]
The reverse inequality trivially holds true. This establishes \eqref{E:norm_form}. 
\end{proof}
\smallskip

Finally, we present:

\begin{proof}[The proof of Theorem~\ref{T:interp-Char}]
Most of the steps of this proof are similar to those in the proofs of results analogous to Theorem~\ref{T:interp-Char}
in the literature cited in Section~\ref{S:intro}. Hence, we shall be brief. We begin with two very standard facts. For
each $p\in \polcl(R)$.
\begin{itemize}
 \item The set
 $\{\overline{K_{\al,\,p}}(X_1, \bcdot),\dots, \overline{K_{\al,\,p}}(X_N, \bcdot)\}$ spans $\al^2(p)\ominus \IX{\al}^2(p)$.
 \item For any $\phi\in \dal(\al)$, we have
 \[
  M^*_{\phi}(\overline{K_{\al,\,p}}(X_j, \bcdot))\,=\,\overline{\phi(X_j)}\,\overline{K_{\al,\,p}(X_j, \bcdot)}, \; \; \;
  j = 1,\dots, N.
 \]
\end{itemize}
For any $f\in \al^2(p)\ominus \IX{\al}^2(p)$, there exist $c_1,\dots, c_N\in \C$ such that
\[
 f\,=\,\sum_{1\leq j\leq N} c_j\overline{K_{\al,\,p}}(X_j, \bcdot),
\]
whence, we compute:
\[
 \Pi_{\al^2(p)}\circ M^*_{\phi}\circ\spProj{p}(f)\,=\,\sum_{1\leq j\leq N} c_j
 \overline{\phi(X_j)}\,\overline{K_{\al,\,p}(X_j, \bcdot)}.
\]
From this it follows, {\em exactly} (and by an elementary computation) as in several of the works cited in
Section~\ref{S:intro} that:
\begin{multline}\label{E:contractivity}
 \big\|\Pi_{\al^2(p)}\circ M^*_{\phi}\circ\spProj{p}\big\|_{{\rm op}}\,\leq\,1 \\
 \iff\,\left[(1 - \phi(X_j)\overline{\phi(X_k)})\big\langle K_{\al,\,p}(X_j, \bcdot), K_{\al,\,p}(X_k, \bcdot)\big\rangle_{\al^2(p)}
 \right]_{j,\,k = 1}^N\,\geq\,0.
\end{multline}

Now, suppose that there exists a function $F\in \dal(\al)$ such that $F(X_j) = w_j$ for each $j = 1,\dots, N$
and such that $\sup_{\D^n}|F|\leq 1$. This implies that $\|[F]\|\leq 1$. Then,
by Proposition~\ref{P:norm_formula} and \eqref{E:contractivity}, \eqref{E:Pick-sys2} follows.
\smallskip

Conversely, assume \eqref{E:Pick-sys2}. Let $\Phi_1,\dots, \Phi_N\in \al$ be as given by Lemma~\ref{L:0-1lemma}.
Write
\[
 \phi\,:=\,\sum_{1\leq j\leq N}w_j\Phi_j\,\in\,\al.
\]
Observe that $\phi(X_j) = w_j$ for $j = 1,\dots, N$. By \eqref{E:contractivity} and
Proposition~\ref{P:norm_formula}, we get $\|[\phi]\|\leq 1$. From the latter we have, by definition:
\[
 \text{For each $\nu\in \Z_+$, $\exists \psi_\nu\in \ide{\al}$ such that
 $\|\left.\phi\right|_{\Tn} + \bv{\psi}_\nu\|_\infty = \sup\nolimits_{\D^n}|\phi + \psi_\nu| < 1 + \newfrc{1}{\nu}$.}
\]
By Montel's theorem, there exists a sequence $\nu_1 <\nu_2 <\nu_3 <\dots$ and a holomorphic
function $F$ defined on $\D^n$ such that
\[
 \phi + \psi_{\nu_k}\!\lrarw F \; \; \text{uniformly on compact subsets of $\D^n$ as $k\to \infty$.}
\]
By Proposition~\ref{P:closure_subalg}, $F\in \dal(\al)$. Clearly $F(X_j) = w_j$ for $j = 1,\dots, N$, and
$\sup_{\D^n}|F|\leq 1$. 
\end{proof}

\section{The proof of Theorem~\ref{T:non-mod1Set}}\label{S:non-mod1Set_proof}

In this section, it will be assumed throughout that $n\geq 2$.
Before we give a proof of Theorem~\ref{T:non-mod1Set}, let us look at an explicit description of the
space $\lprp{\hinf}$. Write
\[
 \mathbb{Y}^n\,:=\,\Z^n\setminus\nat^n,
\]
where $\nat = \{0, 1, 2,\dots\}$. Then, it is not hard to show that
\begin{equation}\label{E:leftAnnH}
 \lprp{\hinf}\,=\,\text{the closure in $\Lb^1(\Tn)$ of 
 ${\rm span}_{\C}\{\left.\zbar_1^{\alpha_1}\,\zbar_2^{\alpha_2}\dots\zbar_n^{\alpha_n}\right|_{\Tn} : 
 (\alpha_1,\dots, \alpha_n)\in \mathbb{Y}^n\}$}
\end{equation}
(an argument for the above can be found in \cite[Section~5]{mccullough:NPtida96}).
\smallskip

We can now present:
 
\begin{proof}[The proof of Theorem~\ref{T:non-mod1Set}] We shall use
notations analogous to those in Sections~\ref{S:funcThr} and \ref{S:interpCond}. Accordingly, we shall
denote by $\idl$ the following ideal:
\[
 \idl\,:=\,\text{the $\wes$ closure of the set of all $A(\D^n)$-functions that vanish on $\boldsymbol{{\sf X}}$,}
\]
where $\boldsymbol{{\sf X}} = \{X_1,\dots, X_N\}$. We shall, in a very essential way, need to work with
the spaces
\[
 \hinf/\idl \quad \text{and} \quad \lprp{\!\idl}/\lprp{\hinf}.
\]
The notation $\|[\psi]\|$, where $\psi\in \hinf$, will have the same meaning as in Section~\ref{S:interpCond}. Similarly,
$\qnrm{[f]}$ will denote the quotient norm on $\lprp{\!\idl}/\lprp{\hinf}$.
\smallskip

Let $\phi$ be an interpolant in $\hinf$ for the given data. Since, by hypothesis, the data are extremal, we have
\begin{equation}\label{E:norm1}
 \|[\phi]\|\,=\,1.
\end{equation}
We appeal again to Lemma~\ref{L:isom2}. Consider the linear functional
\[
 L_{[\phi]}\,:\,\lprp{\!\idl}/\lprp{\hinf}\ni [f]\,\longmapsto\,\int_{\Tn}\!\!\bv{\phi}f dm.
\]
By \eqref{E:norm1} and Lemma~\ref{L:isom2}, we have
$\|L_{[\phi]}\|_{{\rm op}} = 1$. Furthermore, as $\hinf/\idl$ is finite-dimensional, it follows from 
Lemma~\ref{L:isom2}-$(2)$ that
\begin{equation}\label{E:f_0}
 \exists f_0\in \lprp{\!\idl} \ \text{such that $\qnrm{[f_0]} = 1$ and $\int_{\Tn}\!\!\bv{\phi}f_0 dm = 1$}.
\end{equation}
\pagebreak

\noindent{{\bf Step 1.} {\em Finding ``nice'' coset-representatives for $[f_0]$}}
\vspace{0.5mm}

\noindent{By Proposition~\ref{P:SzegoRep}\,---\,taking $\al = A(\D^n)$, whence $\dal(\al) = \hinf$\,---\,there
exist constants $a_1,\dots, a_N\in \C$\,---\,not all of which
are $0$\,---\,such that
\[
 [f_0]\,=\,\bigg[\sum_{1\leq j\leq N} a_jK(X_j, \bcdot)\bigg]
\]
(Recall that, by the reproducing property, $K(X_j, \bcdot)\in \lprp{\!\idl}$ for each $j = 1,\dots, N$.)
By definition
\[
 \qnrm{[f_0]}\,:=\,\inf\{\|f_0 + g\|_{1} : g\in \lprp{\hinf}\}.
\]
So, if we fix $\eps > 0$, there exists a function $g_{\eps}\in \lprp{\hinf}$ such that
\begin{equation}\label{E:qNorm-L1}
 \bigg[\sum_{1\leq j\leq N} a_jK(X_j, \bcdot) + g_{\eps}\bigg] = [f_0] \quad \text{and}
 \quad 1\leq \left\|\sum_{j=1}^N a_jK(X_j, \bcdot) + g_{\eps}\right\|_1 < 1 + \eps/2.
\end{equation}
From the brief discussion prior to this proof, 
\eqref{E:leftAnnH} in particular, it follows that
there exists a polynomial $P_{\eps}$, in $z$ and $\zbar$, of the form
\[
 P_{\eps}(z)\,=\,\sum_{\alpha\in {\sf F}(\eps)}C_{\alpha}\zbar_1^{\alpha_1}\,\zbar_2^{\alpha_2}\dots\zbar_n^{\alpha_n},
\]
where ${\sf F}(\eps)$ is a finite subset of $\mathbb{Y}^n$, such that
\begin{equation}\label{E:approx2-P}
 \|\left.P_{\eps}\right|_{\Tn} - g_{\eps}\|_1\,<\,\eps/2.
\end{equation}
By the form of the polynomial $P_{\eps}$, we see that $P_{\eps}\in \lprp{\hinf}$. Thus, by \eqref{E:qNorm-L1}
and \eqref{E:approx2-P}, we have
\begin{equation}\label{E:reg}
 [f_0]\,=\,\bigg[\sum_{1\leq j\leq N} a_jK(X_j, \bcdot) + \left.P_{\eps}\right|_{\Tn}\bigg] \quad \text{and}
 \quad 1\leq \left\|\sum_{j=1}^N a_jK(X_j, \bcdot) + \left.P_{\eps}\right|_{\Tn}\right\|_1 < 1 + \eps.
\end{equation}}

Let us write
\[
 G_{\eps}\,:=\,\sum_{1\leq j\leq N} a_jK(X_j, \bcdot) + \left.P_{\eps}\right|_{\Tn}.
\]
Let us emphasise how regular $G_{\eps}$ is. Note, firstly, that for each $X_j\in \boldsymbol{{\sf X}}$,
$\overline{K(X_j, \bcdot)}$ is holomorphic (in its second variable) in some neighbourhood\,---\,which depends
on $X_j$\,---\,of $\overline{\D^n}$. Now define the function $\gamma_{\eps}$ which is holomorphic on
${\sf Ann}(0; 1\pm\delta)^n$\,---\,where $\delta>0$ is determined by $X_1,\dots, X_N$\,---\,as follows:
\[
 \gamma_{\eps}(z)\,:=\,\sum_{1\leq j\leq N} \overline{a}_j\overline{K(X_j, z)} + 
 	\sum_{\alpha\in {\sf F}(\eps)}\overline{C}_{\alpha}\prod_{j=1}^nz^{\alpha_1} \; \; \;
 	\forall (z_1,\dots, z_n)\in {\sf Ann}(0; 1\pm\delta)^n.
\]
Observe that
\begin{equation}\label{E:why-reg}
 \left.\overline{\gamma}_{\eps}\right|_{\Tn}\,=\,G_{\eps}.
\end{equation}
In short, associated to $[f_0]$ is a family of coset-representatives $G_\eps$ that are restrictions
to $\Tn$ of antiholomorphic functions and whose $\Lb^1$-norms decrease to $1$.
\smallskip

\noindent{{\bf Step 2.} {\em Finding a sequence of measures with useful properties}}
\vspace{0.5mm}

\noindent{Since $\gamma_{\eps}\in \hol({\sf Ann}(0; 1\pm\delta)^n)$, it follows from \eqref{E:why-reg} that
$G_{\eps}^{-1}\{0\}$ is a real-analytic subset of $\Tn$. As $G_{\eps}\not\equiv 0$, it follows from the basic
theory of real-analytic sets that
\begin{equation}\label{E:zero-meas}
 m(G_{\eps}^{-1}\{0\})\,=\,0 \; \; \; \text{for each $\eps>0$}.
\end{equation}}
\vspace{-2mm}

Let us now define the positive measures $\mu_{\eps}$ on $\Tn$ such that $d\mu_{\eps} = |G_{\eps}|dm$.
These measures have the following useful property:
\begin{align}
 \mu_{\eps}\!\left[\{\zt\in \Tn : 1 - |\bv{\phi}(\zt)| \geq \sqrt{\eps}\}\right]\,&\leq\,\frac{1}{\sqrt{\eps}}
 \int_{\Tn}(1-|\bv{\phi}|)|G_{\eps}|dm \notag \\
 &<\,\frac{1}{\sqrt{\eps}}\bigg((1+\eps) - \left|\int_{\Tn}\bv{\phi}G_{\eps}dm\right|\bigg)\,=\,\sqrt{\eps},
 \label{E:small-meas}
\end{align}
which follows from Chebyshev's inequality, \eqref{E:f_0} and \eqref{E:reg}.
\smallskip

We would ultimately like to estimate the {\em Lebesgue} measures of the above sets. To that end, we have the
following observation. Write
\[
 \Gamma_{\eps}(\zt)\,:=\,\begin{cases}
 						1/|G_{\eps}(\zt)|, &\text{if $\zt\notin G_{\eps}^{-1}\{0\}$}, \\
 						0, &\text{if $\zt\in G_{\eps}^{-1}\{0\}$}.
 						\end{cases}
\]
Clearly, $\Gamma_{\eps}\in \Lb^{1}(\Tn, d\mu_{\eps})$ for each $\eps>0$. It follows from \eqref{E:zero-meas}
that
\begin{equation}\label{E:relate}
 m(E)\,=\,\int_E\Gamma_{\eps}d\mu_{\eps} \; \; \; \text{for every Lebesgue measurable set $E\subseteq \Tn$}
\end{equation}
for each $\eps>0$.
\smallskip

\noindent{{\bf Step 3.} {\em Completing the proof}}
\vspace{0.5mm}

\noindent{Recall that $\bv{\phi}$ is undefined on a set of Lebesgue measure zero. It will not affect the conclusions
of the argument below if we fix $\bv{\phi}(\zt) = 0$ on the latter set. Note that
\begin{equation}\label{E:dec-int}
 \{\zt\in \Tn : 1 - |\bv{\phi}(\zt)| > 0\}\,=\,\liminf_{k\to\infty}\{\zt\in \Tn : 1 - |\bv{\phi}(\zt)| > 1/k^3\}.
\end{equation}
Denote the set on the left-hand side of the above equation as $\mathcal{S}$ and write
$E_k := \{\zt\in \Tn : 1 - |\bv{\phi}(\zt)| > 1/k^3\}$. Let us define
\begin{align*}
 A_k\,&:=\,\{\zt\in E_{k} : |G_{1/k^6}(\zt)| \geq 1/k\}, \\
 B_k\,&:=\,\{\zt\in E_{k} : |G_{1/k^6}(\zt)| < 1/k\}, \; \; \; k = 1, 2, 3,\dots
\end{align*}
From \eqref{E:small-meas}, we have $\mu_{1/k^6}(A_k) < 1/k^3$. Thus, from \eqref{E:relate}, we get
\begin{equation}\label{E:A_k-meas}
 m(A_k)\,=\,\int_{A_k}\frac{1}{|G_{1/k^6}|}d\mu_{1/k^6}\,\leq\,\frac{1}{k^2} \; \; \; \forall k\in \Z_+.
\end{equation}
Let us define $S_k := \{\zt\in \Tn : |G_{1/k^6}(\zt)| < 1/k\}$. Then
\[
 S_k\,=\,\{\zt\in \Tn : -\overline{G_{1/k^6}(\zt)}G_{1/k^6}(\zt) > -1/k^2\}, \; \; \; k = 1, 2, 3,\dots,
\]
whence, by \eqref{E:why-reg}, each $S_k$ is semi-analytic. And clearly, as $\|G_{1/k^6}\|_{1}\geq 1$,
each $S_k$ is a proper subset of $\Tn$.}
\smallskip

\noindent{{\bf Claim.} $\mathcal{S}\subseteq \limsup_{k\to \infty}A_k \cup \liminf_{k\to \infty}S_k$.}\\
\vspace{.5mm}

\noindent{Pick a $\zt\in \mathcal{S}$. Then, $\exists k_1(\zt)\in \nat$ such that $\zt\in E_k \ \forall k\geq k_1(\zt)$.
Suppose $\zt\notin \limsup_{k\to \infty}A_k$. By definition, $\exists k_2(\zt)\in \nat$ such that $\zt\notin A_k \ \forall
k \geq k_{2}(\zt)$. As $A_k$ and $B_k$ partition $E_k$, it follows that
\[
 \zt\in B_k \subseteq S_k \; \; \; \forall k\geq \max(k_1(\zt), k_2(\zt)).
\]
The claim follows.}
\smallskip

Recall that $m$ is normalized to be a probability measure. Thus, by \eqref{E:A_k-meas} and the Borel--Cantelli
lemma, we have $m(\limsup_{k\to \infty}A_k) = 0$. Finally, let us write:
\[
 N\,:=\,\limsup_{k\to \infty}A_k \quad\text{and} \quad
 S\,:=\,\liminf_{k\to \infty}S_k.
\]
Since $A_k\cap S_k = \varnothing$ $\forall k\in \Z_+$, it is very easy to see that $S\cap N = \varnothing$. Thus
$\mathcal{S}\subset N\sqcup S$.
\end{proof}

\end{document}